\newcounter{exl}
\newenvironment{proof}{\begin{trivlist}
	\item[\noindent]{\it Proof\:}}{\quad $\square$\end{trivlist}}
\newenvironment{exl}{\begin{trivlist} \addtocounter{exl}{1}
	\item[\noindent]{\bf Example \theexl\:}}{\end{trivlist}}
\newenvironment{rem}{\begin{trivlist}
	\item[\noindent]{\bf Remark\:}}{\end{trivlist}}
\newtheorem{dfn}{Definition}[section]
\newtheorem{thm}{Theorem}
\newtheorem{prp}[dfn]{Proposition}
\newtheorem{lem}[dfn]{Lemma}
\newtheorem{cor}[dfn]{Corollary}
\def\R{{\mathbb R}}
\def\E{{\mathbb E}}
\def\H{{\mathbb H}}
\def\D{{\mathbb D}}
\def\Sph{{\mathbb S}}
\def\phi{\varphi}
\def\const{\mathrm{const}}
\def\span{\operatorname{span}}
\def\dist{\operatorname{dist}}
\def\im{\operatorname{im}}
\def\rk{\operatorname{rk}}
\def\pr{\operatorname{pr}}
\def\cR{\mathcal{R}}
\def\cF{\mathcal{F}}
\def\cQ{\mathcal{Q}}
\def\RP{{\mathbb R}{\mathrm{P}}}
\def\cV{\mathcal{V}}
\def\cE{\mathcal{E}}
\title{Projective background of the infinitesimal rigidity \newline of frameworks}
\author{Ivan Izmestiev\thanks{Research for this article was supported by the DFG Research Unit 565 ``Polyhedral Surfaces''.}}
\date{April 16, 2008}
\begin{document}

\maketitle

\begin{abstract}
We present proofs of two classical theorems. The first one, due to Darboux and Sauer, states that infinitesimal rigidity is a projective invariant; the other one establishes relations (infinitesimal Pogorelov maps) between the infinitesimal motions of a Euclidean framework and of its hyperbolic and spherical images.

The arguments use the static formulation of infinitesimal rigidity. The duality between statics and kinematics is established through the principles of virtual work. A geometric approach to statics, due essentially to Grassmann, makes both theorems straightforward. Besides, it provides a simple derivation of the formulas both for the Darboux-Sauer correspondence and for the infinitesimal Pogorelov maps.
\end{abstract}

\section{Introduction}
\label{sec:Intro}

\subsection{Infinitesimal rigidity}
A \emph{framework} is a collection of bars joined together at their ends by universal joints. A framework is called \emph{rigid}, if it cannot be flexed at the joints without deforming the bars; or, equivalently, if it can be moved only as a rigid body. The mathematical formalization of this is straightforward: a framework is a collection of points with distances between some pairs of them fixed; rigidity means that the points cannot be moved without changing one of those distances.

A framework is \emph{infinitesimally rigid} if its nodes cannot be moved in such a manner that the lengths of the bars remain constant in the first order. In practice, an infinitesimally flexible framework allows a certain amount of movement, even if it is rigid in the above sense.

A classical result on infinitesimal rigidity is the Legendre-Cauchy-Dehn theorem, \cite{Leg94}, \cite{Cau13}, \cite{Dehn16}: \emph{Every convex 3-dimensional polyhedron is infinitesimally rigid.} The theorem can be restated in the language of frameworks: The framework consisting of the vertices, edges and all face diagonals of a convex polyhedron is infinitesimally rigid. In fact, ``all face diagonals'' is redundant: it suffices to triangulate the faces arbitrarily, without adding new vertices. See \cite{Whi84a}, where this is generalized to higher dimensions.

For more information on different concepts of rigidity and an overview of results in this area, see the survey article \cite{Con93}.

Another classical but undeservedly little known result is the projective invariance of infinitesimal rigidity. For discrete structures it was first noticed by Rankine in 1863; it was proved by Darboux for smooth surfaces and by Liebmann and Sauer for frameworks, see Section \ref{subsec:rem}. Closely related to the projective invariance is the fact, discovered by Pogorelov, that a Euclidean framework can be turned into a hyperbolic or spherical one, respecting the infinitesimal rigidity.

The present paper contains proofs of these two properties of infinitesimal rigidity. The idea behind the proofs is not new, but we hope that a modern exposition might be useful. Our interest was stimulated by new applications that the projective properties of infinitesimal rigidity found in recent years, \cite{Scl05}, \cite{Scl06}, \cite{Fil08}.

There are further manifestations of the projective nature of the infinitesimal rigidity, such as its relations with polarity \cite{Whi87, Whi89} and Maxwell's theorem on projected polyhedra \cite{Whi82}.

Now let us state the two theorems in a precise way.

\subsection{Darboux-Sauer correspondence}
A framework $P$ in the Euclidean space $\E^d$ can be mapped by a projective map $\Phi:\RP^d \to \RP^d$ to another framework $\Phi(P)$. Here we assume that an affine embedding of $\E^d$ into $\RP^d$ is fixed and that $\Phi$ maps no vertex of $P$ to infinity. Frameworks $P$ and $\Phi(P)$ are called \emph{projectively equivalent}.
\begin{thm}[Darboux-Sauer correspondence]
\label{thm:DS}
Let $P$ and $P'$ be two projectively equivalent frameworks in $\E^d$. Then $P'$ is infinitesimally rigid iff $P$ is infinitesimally rigid. Moreover, the number of degrees of freedom of $P$ and $P'$ coincide.
\end{thm}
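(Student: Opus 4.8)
The plan is to pass from the kinematic to the static description of the framework and to exploit the fact that statics, expressed in the exterior algebra of $\R^{d+1}$, transforms linearly — hence transparently — under a projective map. Write the rigidity matrix $R$ of size $|E|\times dn$, whose kernel is the space of infinitesimal flexes and whose left kernel $\ker R^{\mathrm{T}}$ is the space of self-stresses. Both quantities we care about are governed by $\rk R$: the dimension of the flex space is $dn-\rk R$, the number of independent self-stresses is $s=|E|-\rk R$, and the number of degrees of freedom is $dn-\rk R-t$, where $t$ is the dimension of the trivial flexes (infinitesimal isometries). Since a projective map $\Phi$ taking no vertex to infinity preserves $n$, $d$, $|E|$ and the dimension of the affine hull of the vertices (it sends the linear span of the homogenized vertices to the linear span of their images), the number $t$ is the same for $P$ and $P'=\Phi(P)$. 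Thus it suffices to prove $s=s'$; then infinitesimal rigidity (i.e.\ zero degrees of freedom) and the degree-of-freedom count will coincide for $P$ and $P'$ simultaneously.

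Next I would give the Grassmann form of the equilibrium condition. Fix the affine embedding $\E^d\hookrightarrow\RP^d$, $p\mapsto\hat p=(p,1)\in\R^{d+1}$, and denote by $u$ the homogenizing basis vector, so that $\hat p_i=p_i+u$. A short computation shows that a symmetric array $(\omega_{ij})$ is a self-stress, i.e.\ $\sum_{j}\omega_{ij}(p_i-p_j)=0$ at every vertex $i$, if and only if
\[
\sum_{j}\omega_{ij}\,\hat p_i\wedge\hat p_j=0 \quad\text{in }\Lambda^2\R^{d+1}\ \text{ for every }i.
\]
Indeed, $\sum_j\omega_{ij}\hat p_i\wedge\hat p_j=\hat p_i\wedge\bigl(\sum_j\omega_{ij}\hat p_j\bigr)$, and the bracket equals $\bigl(\sum_j\omega_{ij}\bigr)\hat p_i$ exactly when the force balance holds; reading off the $u$-component and the $\R^d$-component recovers the equivalence in the reverse direction. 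This is the geometric statement that the bar-forces meeting at each joint, represented by the Plücker bivectors of their lines, are in equilibrium.

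The projective step is then immediate. Lift $\Phi$ to $\tilde\Phi\in\mathrm{GL}(\R^{d+1})$; since $\tilde\Phi$ sends the line $\R\hat p_i$ to the line $\R\hat p_i'$, we have $\tilde\Phi\hat p_i=\alpha_i\hat p_i'$ for nonzero scalars $\alpha_i$. Applying the isomorphism $\Lambda^2\tilde\Phi$ to the vanishing bivector above yields $\sum_j\alpha_i\alpha_j\,\omega_{ij}\,\hat p_i'\wedge\hat p_j'=0$ for every $i$. Hence $\omega'_{ij}:=\alpha_i\alpha_j\,\omega_{ij}$ is a self-stress of $P'$, and $\omega\mapsto\omega'$ is a linear isomorphism of self-stress spaces (it is diagonal with nonzero entries). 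Therefore $s=s'$, which together with the first paragraph finishes the argument; the explicit scaling $\omega'_{ij}=\alpha_i\alpha_j\omega_{ij}$ is precisely the Darboux--Sauer correspondence on the static side.

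The substantive part is not the projective transport, which the exterior algebra renders trivial, but the groundwork preceding it: establishing the static--kinematic duality (via the principle of virtual work) so that infinitesimal rigidity and the degree-of-freedom count are genuinely read off from $\rk R$, and hence from $s$, and verifying the Grassmann characterization of joint equilibrium. I would also take care with the degenerate bookkeeping — that $\Phi$ preserves the dimension of the affine span, so that the trivial-flex count $t$ agrees for $P$ and $P'$. If one prefers a correspondence of motions rather than of stresses, the same $\tilde\Phi$ can be dualized through virtual work to match infinitesimal flexes directly, giving the kinematic form of the same map.
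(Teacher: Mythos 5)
Your argument is correct, and its engine --- representing the bar force along $ij$ by the bivector $\hat p_i\wedge\hat p_j\in\Lambda^2\R^{d+1}$, which transforms linearly under a lift $\tilde\Phi\in GL(\R^{d+1})$ of $\Phi$ --- is exactly the paper's (Propositions \ref{prp:ProjStatCorr} and \ref{prp:DS_Stat}). But you deploy it on a different object. The paper transports \emph{loads}: it builds an isomorphism of the quotients $\cF_{\mathrm{eq}}/\cF_{\mathrm{res}}$ of equilibrium loads modulo resolvable ones and then invokes the full static--kinematic duality $\cQ_{\mathrm{mot}}/\cQ_{\mathrm{triv}}\cong(\cF_{\mathrm{eq}}/\cF_{\mathrm{res}})^*$ of Theorem \ref{thm:InfStat}, proved via the principles of virtual work. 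You transport \emph{self-stresses} (the left kernel of the rigidity matrix) by the explicit diagonal map $\omega_{ij}\mapsto\alpha_i\alpha_j\omega_{ij}$, and finish by rank--nullity: since $|\cE|$, $d\,|\cV|$, and the trivial-flex dimension are all preserved by $\Phi$, equality of self-stress dimensions forces equality of $\rk\cR$ and hence of the degree-of-freedom counts. Your Grassmann characterization of a self-stress ($\sum_j\omega_{ij}\,\hat p_i\wedge\hat p_j=0$ at each vertex) is verified correctly in both directions, and the bookkeeping on the trivial-flex count is sound under the paper's standing assumption that the vertices affinely span $\E^d$. What your route buys is economy: you never need the virtual-work duality, only the observation that $\ker\cR=\cQ_{\mathrm{mot}}$ and $\ker\cR^\top$ is the self-stress space. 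What it gives up is the correspondence itself: the theorem's companion statement in the paper is the canonical bijection $\Phi^{\mathrm{kin}}$ on infinitesimal motions with its explicit formula (Proposition \ref{prp:DS_Kin}), whereas your argument as written proves only the numerical coincidence; you rightly note at the end that dualizing your stress map through virtual work would recover the kinematic correspondence, which is precisely the path the paper takes.
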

By the number of degrees of freedom of a framework we mean the dimension of the space of its infinitesimal motions modulo trivial ones. An infinitesimal motion is called trivial if it can be extended to an infinitesimal motion of $\E^d$ (equivalently, if it moves $P$ as a rigid body).

More specifically, let $\Phi$ be a projective map such that $P' = \Phi(P)$. Then $\Phi$ induces a bijection $\Phi^{\mathrm{kin}}$ between the space of infinitesimal motions of $P$ and the space of infinitesimal motions of $P'$ that maps trivial motions to trivial ones. We call the map $\Phi^{\mathrm{kin}}$ the (kinematic) \emph{Darboux-Sauer correspondence}.

\subsection{Infinitesimal Pogorelov maps}
Here is a simple way to describe the infinitesimal Pogorelov map. Consider a framework $P$ that is contained in a disk $\D^d \subset \E^d$. When the interior of $\D^d$ is viewed as Klein model of the hyperbolic space $\H^d$, the Euclidean framework $P$ turns into a hyperbolic framework $P^\H$. Pogorelov proved that $P^\H$ is infinitesimally rigid iff $P$ is; moreover, there is a canonical way to associate to every infinitesimal motion of $P$ an infinitesimal motion of $P^\H$ (with trivial motions going to trivial ones). This association is called the infinitesimal Pogorelov map.

Now let's be formal. Make the following identifications:
\begin{eqnarray}
\E^d & = & \{x \in \R^{d+1}|\; x^0 = 1\};\\
\H^d & = & \{x \in \R^{d+1}|\; x^0 > 0,\, \|x\|_{1,d}=1\}; \label{eqn:H^d}\\
\Sph^d & = & \{x \in \R^{d+1}|\; \|x\|=1\}, \label{eqn:S^d}
\end{eqnarray}
where $\|\cdot\|$ denotes the Euclidean norm, and $\|\cdot\|_{1,d}$ denotes the Minkowski norm of signature $(+,-,\ldots,-)$ in $\R^{d+1}$.

The projection from the origin of $\R^{d+1}$ defines the maps
\begin{eqnarray}
\Pi_\H: \D^d & \to & \H^d;\\
\Pi_\Sph: \E^d & \to & \Sph^d,
\end{eqnarray}
where $\D^d$ is the open unit disk in $\E^d \subset \R^{d+1}$ centered at $(1,0,\ldots,0)$.

To a framework $P$ in $\E^d$ there correspond frameworks $P^\H = \Pi_\H(P)$ and $P^\Sph = \Pi_\Sph(P)$ in $\H^d$ and $\Sph^d$. Note that $P^\H$ is defined iff $P \subset \D^d$.

\begin{thm}[Infinitesimal Pogorelov maps]
\label{thm:PogMaps}
Let $P$ be a framework in~$\E^d$. Then the following are equivalent:
\begin{itemize}
\item the Euclidean framework $P$ is infinitesimally rigid;
\item (for $P \subset \D^d$) the hyperbolic framework $P^\H$ is infinitesimally rigid;
\item the spherical framework $P^\Sph$ is infinitesimally rigid.
\end{itemize}
Moreover, frameworks $P$, $P^\H$, and $P^\Sph$ have the same number of degrees of freedom.
\end{thm}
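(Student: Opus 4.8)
The plan is to recast infinitesimal rigidity statically and to carry out the whole analysis in the ambient space $\R^{d+1}$, where all three geometries sit simultaneously as the hyperplane $\{x^0=1\}$, the hyperboloid $\H^d$, and the sphere $\Sph^d$. The decisive point, which is exactly the geometric (Grassmann) formulation of statics, is that a self-stress condition is really a condition on the lines through the origin spanned by the lifted vertices, and is therefore insensitive to the choice of representative on each such line. Since $\Pi_\H$ and $\Pi_\Sph$ are central projections from the origin, the vertices of $P$, $P^\H$, and $P^\Sph$ are positive scalar multiples of one and the same family of vectors $\hat p_i \in \R^{d+1}$ (the hyperbolic lift being available exactly when $P \subset \D^d$); thus the three frameworks share the same underlying configuration of lines, and I expect their spaces of self-stresses to be canonically isomorphic.

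Concretely, first I would write the self-stress (equilibrium) equations in each geometry. Lifting a Euclidean vertex $p_i$ to $\hat p_i=(1,p_i)$, the Euclidean condition $\sum_j \omega_{ij}(p_j-p_i)=0$ is equivalent to $\sum_j \omega_{ij}\hat p_j \in \span(\hat p_i)$, i.e. to $\hat p_i \wedge \sum_j \omega_{ij}\hat p_j = 0$ in $\wedge^2\R^{d+1}$. Next I would check that the spherical and hyperbolic equilibrium conditions take the very same form: writing $q_i$ for the vertices on $\Sph^d$ or $\H^d$ and resolving the bar forces into the tangent space (orthogonally for the sphere, Minkowski-orthogonally for $\H^d$), the vanishing of the tangential resultant at $q_i$ reads precisely $\sum_j \omega_{ij} q_j \in \span(q_i)$, because $\span(q_i)$ is the normal line in either metric. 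This is the step I expect to be the crux: one must verify that the metric enters only through the splitting $\R^{d+1}=\span(q_i)\oplus T_{q_i}$ and then drops out of the final condition.

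Granting this, rescaling the representatives $q_i=\mu_i\hat p_i$ (with $\mu_i>0$ the central-projection factors) turns a self-stress $\omega$ of one framework into the self-stress $\mu_i\mu_j\,\omega_{ij}$ of another, so the three self-stress spaces have equal dimension; the same rescaling argument proves Theorem \ref{thm:DS}, now with a general linear map $A$ on $\R^{d+1}$ in place of the diagonal rescaling. Finally I would close the argument by the statics--kinematics count. The self-stresses form the cokernel of the rigidity matrix $R$, so $\rk R = e - \dim(\text{self-stresses})$, and the number of degrees of freedom equals $dv - \rk R - \binom{d+1}{2}$, where $\binom{d+1}{2}$ is the common dimension of the isometry groups of $\E^d$, $\H^d$, and $\Sph^d$. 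Since the number of vertices $v$, the number of edges $e$, the dimension $d$, and now also $\dim(\text{self-stresses})$ agree for $P$, $P^\H$, and $P^\Sph$, the degrees of freedom coincide; in particular all three vanish simultaneously, which is the asserted equivalence of infinitesimal rigidity. The explicit correspondence of infinitesimal motions can then be read off by dualizing the rescaling through the principle of virtual work.
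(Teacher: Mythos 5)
Your argument is correct, but it reaches the theorem by a genuinely different route than the paper. The paper's proof is purely kinematic: it identifies velocity fields on $P$, $P^\H$, $P^\Sph$ with velocity fields on the one projective framework $X$ they all share (velocity vectors being classes in $(\Lambda^2\R^{d+1})^*/\Lambda^2 x^\perp$), and checks via Lemmas \ref{lem:ScalProd}, \ref{lem:EucProj} and \ref{lem:HypProj} that these identifications preserve both the infinitesimal-motion condition and triviality; this produces canonical bijections --- the infinitesimal Pogorelov maps themselves, whose explicit form is then extracted in Proposition \ref{prp:PogForm}. You instead pass to self-stresses: the observation that the equilibrium condition at a vertex reads $x_i \wedge \sum_j \omega_{ij} x_j = 0$, hence depends only on the lines $\span(x_i)$ and transforms under rescaling $x_i \mapsto \mu_i x_i$ by $\omega_{ij} \mapsto \mu_i\mu_j\omega_{ij}$, is exactly right and is the static shadow of the paper's Proposition \ref{prp:ProjStatCorr}; combined with the count $\mathrm{dof} = d|\cV| - \rk\cR - \binom{d+1}{2}$ and $\rk\cR = |\cE| - \dim(\text{self-stresses})$ this gives equality of the degrees of freedom, which is all the theorem asserts. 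Both routes rest on the same two unglamorous facts, which you use implicitly and should state: that the infinitesimal-motion condition in $\H^d$ and $\Sph^d$ takes the chord form $\langle p_i - p_j, q_i - q_j\rangle = 0$ in the ambient (Minkowski resp.\ Euclidean) product --- the paper's Lemma \ref{lem:ScalProd} --- and that the trivial motions have dimension $\binom{d+1}{2}$ in all three geometries once the vertices span $\R^{d+1}$. Also note that your cokernel identification requires taking the transpose with respect to the restriction of the ambient form to the tangent spaces, which is legitimate because that restriction is definite in both cases. What the trade-off buys: your count is more elementary (no projective velocity space is needed) and delivers Theorem \ref{thm:DS} by the same token with $M \in GL(\R^{d+1})$ in place of the diagonal rescaling; the paper's route gives more, namely the canonical bijections of motions and their formulas, rather than only an equality of dimensions.
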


Again, both statements of the theorem follow from the fact that there exist bijections between infinitesimal motions of frameworks $P$, $P^\H$, and $P^\Sph$ that map trivial motions to trivial ones. These bijections are called the \emph{infinitesimal Pogorelov maps}.

\subsection{Plan of the paper}
Section \ref{sec:Classical} contains preliminary material. The focus here is on the equivalence between infinitesimal rigidity and static rigidity expressed in Theorem \ref{thm:InfStat}. This theorem is a direct consequence of \emph{principles of virtual work} (Lemma~\ref{lem:Dual}).

Section \ref{sec:Proj} develops ``projective statics'' and ``projective kinematics''. The goal is to define motions and loads within projective geometry, which makes the projective invariance of infinitesimal rigidity straightforward. Geometric description of Darboux-Sauer correspondence is derived.

With infinitesimal rigidity defined in projective terms, it is not hard to relate the kinematics of frameworks $P$, $P^\H$, and $P^\Sph$. This is done in Section~\ref{sec:InfPog}, where formulas for the infinitesimal Pogorelov maps are also derived.

\subsection{Acknowledgements}
I am grateful to Walter Whiteley for inspiring discussions, and to Fran\c{c}ois Fillastre and Jean-Marc Schlenker for useful remarks.

\subsection{Examples}
\label{subsec:Exls}
Let us illustrate Theorem \ref{thm:DS} with some examples. Among the frameworks with a given combinatorics, the infinitesimally flexible ones sometimes have a nice geometric description. By Theorem \ref{thm:DS}, the description can always be made in projective terms.

\begin{exl}
Blaschke \cite{Bla20} and Liebmann \cite{Lie20} proved the following:
\begin{quote}
Let $P$ be a framework combinatorially equivalent to the skeleton of the octahedron. Color the triangles spanned by the edges of~$P$ black and white so that neighbors have different colors. The framework $P$ is infinitesimally flexible iff the planes of the four black triangles intersect, maybe at infinity. As a corollary, the planes of four white triangles intersect iff the planes of the four black ones do.
\end{quote}

\begin{figure}[ht]
\begin{center}
\input{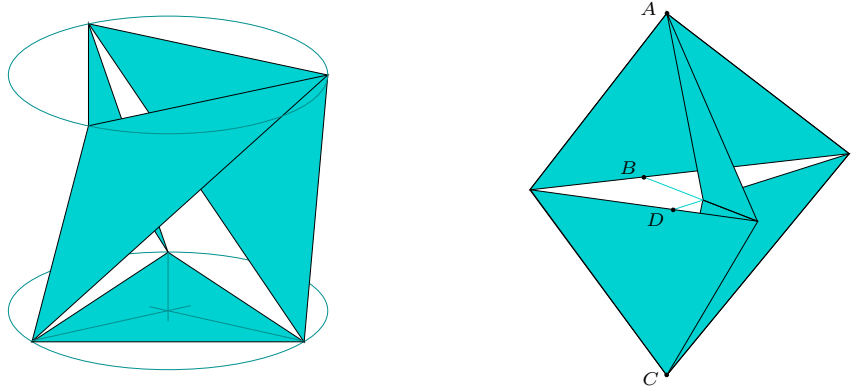}
\end{center}
\caption{Examples of infinitesimally flexible octahedra. Left: antiprism twisted by $90^\circ$. Right: the points $A$, $B$, $C$, $D$ lie in one plane.}
\label{fig:Blaschke}
\end{figure}

Figure \ref{fig:Blaschke} shows two configurations satisfying this condition. At the left is an example from \cite{Wun65}. It is obtained from a straight antiprism over a regular triangle by rotating one of the bases by $90^\circ$. It is easy to see that the horizontal shaded triangle is cut by the planes of the other three shaded triangles along its medians. Hence the four shaded planes intersect at a point. The right-hand example is due to Liebmann and is also depicted in \cite{Glu75}. Here the points $A$, $B$, $C$, and $D$ are assumed to lie in one plane. Since each of the four shaded planes contains one of the lines $AB$ or $CD$, they all pass through the intersection point of these lines.
\end{exl}

\begin{exl}
Consider the planar framework at the left of Figure \ref{fig:Desargues}. The lines matching the vertices of the two triangles are parallel. This implies that the velocity field represented by arrows is an infinitesimal motion. Hence, infinitesimally flexible will be any framework where the three matching lines are concurrent. In fact, this is a necessary and sufficient condition:
\begin{quote}
The planar framework on the right hand side of Figure \ref{fig:Desargues} is infinitesimally flexible iff the three lines $a$, $b$, $c$ intersect.
\end{quote}
Note that this condition is equivalent to the framework being a projection of a skeleton of a 3-polytope, so that the statement is a special case of Maxwell's theorem, \cite{Whi82}.
\end{exl}

\begin{figure}[ht]
\begin{center}
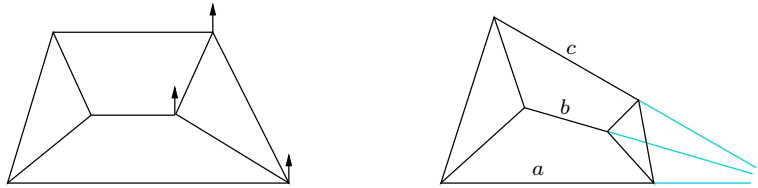
\end{center}
\caption{The framework at the left is infinitesimally flexible. The framework at the right is infinitesimally flexible iff the lines $a$, $b$, $c$ intersect.}
\label{fig:Desargues}
\end{figure}

\begin{exl}
Walter Whiteley \cite{Whi84c} shows how to derive from Theorem~\ref{thm:DS} the following statement:
\begin{quote}
Let $P$ be a framework in the Euclidean space $\E^d$ with combinatorics of a bipartite graph. If all of the vertices of $P$ lie on a non-degenerate quadric, then $P$ is infinitesimally flexible.
\end{quote}
Assume that all of the vertices of $P$ lie on the sphere. Move all the white vertices towards the center of the sphere, and all the black vertices in the opposite direction, see Figure \ref{fig:Bipartite}. It is easy to see that the distances between white and black vertices don't change in the first order. Thus $P$ is infinitesimally flexible. Since any non-degenerate quadric is a projective image of the sphere, $P$ is also infinitesimally flexible when it is inscribed in a quadric.

\begin{figure}[ht]
\begin{center}
\includegraphics{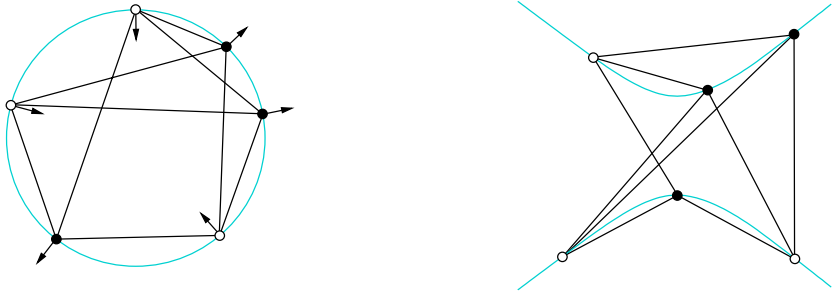}
\end{center}
\caption{The framework at the left is infinitesimally flexible. Due to the projective invariance of infinitesimal rigidity, the framework at the right is also infinitesimally flexible.}
\label{fig:Bipartite}
\end{figure}

The question about rigidity of bipartite frameworks was studied in \cite{BolRot80}. A characterization of infinitesimally flexible complete bipartite frameworks is given in \cite{Whi84b}.
\end{exl}

\section{Infinitesimal and static rigidity}
 \label{sec:Classical}
\subsection{Frameworks}
 \label{subsec:Frameworks}
Let $(\cV, \cE)$ be a graph with vertex set $\cV$ and edge set $\cE$. We denote the vertices by letters $i, j, \ldots$, and an edge joining the vertices $i$ and $j$ by $ij$.
\begin{dfn}
 \label{dfn:Framework}
A \emph{framework} in $\E^d$ with graph $(\cV, \cE)$ is a map
$$
\begin{array}{rrcl}
P: & \cV & \to & \E^d,\\
& i & \mapsto & p_i
\end{array}
$$
such that $p_i \ne p_j$ whenever $ij \in \cE$.
\end{dfn}
In other words, a framework is a straight-line drawing of a graph in $\E^d$, with self-intersections (even non-transverse ones) allowed. The motivation for studying frameworks comes from mechanical linkages; namely, the edges of a framework should be considered as rigid bars, and the vertices as universal joints.

Throughout the paper we assume that the vertices $(p_i)_{i \in \cV}$ of the framework span the space $\E^d$ affinely. This is not a crucial restriction: if the framework lies in an affine subspace of $\E^d$, then its infinitesimal motions can be decomposed into the direct sum of infinitesimal motions inside $\mathrm{span} \{p_i\}$ and arbitrary displacements in directions orthogonal to $\mathrm{span} \{p_i\}$.

\begin{rem}
We use different notations for the Euclidean space $\E^d$ and for the vector space $\R^d$. Informally speaking, $\E^d$ consists of points, $\R^d$ consists of vectors. We obtain $\E^d$ from $\R^d$ by ``forgetting'' the origin. The tangent space at every point of $\E^d$ is $\R^d$ with the standard scalar product. Also, every pair of points $p$, $p'$ in $\E^d$ determines a vector $p'-p \in \R^d$.
\end{rem}

\subsection{Infinitesimal rigidity}
 \label{subsec:ClassInf}
A continuous motion of the framework $P$ is a family $P(t)$ of frameworks ($t$ ranges over a neighborhood of $0$) such that $P(0) = P$ and the length of every bar does not depend on $t$:
\begin{equation}
 \label{eqn:BarLengths}
\|p_i(t) - p_j(t)\| = \const_{ij} \mbox{ for every }ij \in \cE.
\end{equation}
If $P(t)$ is differentiable, then differentiating \eqref{eqn:BarLengths} at $t=0$ yields
$$
\langle p_i - p_j, \dot p_i - \dot p_j \rangle = 0 \mbox{ for every }ij \in \cE.
$$

This motivates the following definition.

\begin{dfn}
 \label{dfn:InfMot}
A \emph{velocity field} on the framework $P$ is a map
$$
\begin{array}{rrcl}
Q: & \cV & \to & \R^d,\\
& i & \mapsto & q_i.
\end{array}
$$
A velocity field on $P$ is called an \emph{infinitesimal motion} of $P$ iff
\begin{equation}
 \label{eqn:InfMot}
\langle p_i - p_j, q_i - q_j \rangle = 0 \mbox{ for every }ij \in \cE.
\end{equation}
\end{dfn}

Since the conditions \eqref{eqn:InfMot} are linear in $Q$, infinitesimal motions of the framework $P$ form a vector space. Denote this vector space by $\cQ_{\mathrm{mot}}$.

Let $\{\Phi_t\}$ be a differentiable family of isometries of $\E^d$ such that $\Phi_0 = \mathrm{id}$. The vector field on $\E^d$ given by
$$
Q(x) = \left.\frac{d\Phi_t(x)}{dt}\right|_{t=0}
$$
is called an \emph{infinitesimal isometry} of $\E^d$. An infinitesimal motion of $P$ that is the restriction of an infinitesimal isometry of $\E^d$ is called \emph{trivial}. The space of trivial infinitesimal motions of $P$ is denoted by $\cQ_{\mathrm{triv}}$.

\begin{dfn}
\label{dfn:InfRig}
The framework $P$ is called \emph{infinitesimally rigid} iff all its infinitesimal motions are trivial.

The dimension of the quotient space $\cQ_{\mathrm{mot}}/\cQ_{\mathrm{triv}}$ is called the number of \emph{kinematic degrees of freedom} of the framework $P$.
\end{dfn}

The framework $P$ is called \emph{rigid} iff every continuous motion $P(t)$ has the form $\Phi_t \circ P$ with $\Phi_t$ a continuous family of isometries of $\R^d$. Intuition suggests that an infinitesimally rigid framework should be rigid. This is true \cite{Con80, RotWhi81}, but not straightforward since not every continuous motion can be reparametrized into a smooth one.

In the opposite direction, rigidity does not imply infinitesimal rigidity. Any of the frameworks on figures \ref{fig:Blaschke}--\ref{fig:Bipartite} can serve as an example.

\subsection{Static rigidity}
\label{subsec:ClassStat}
In the statics of rigid body, a force is represented as a line-bound vector. A collection of forces does not necessarily reduce to a single force.
\begin{dfn}
\label{dfn:Force}
A \emph{force} is a pair $(p,f)$ with $p \in \E^d$, $f \in \R^d$. A \emph{system of forces} is a formal sum $\sum_i (p_i,f_i)$ that may be transformed according to the following rules:
\begin{enumerate}
\setcounter{enumi}{-1}
\item a force with a zero vector is a zero force:
$$
(p,0) \sim 0;
$$
\item forces at the same point can be added and scaled as usual:
$$
\lambda_1(p,f_1) + \lambda_2(p,f_2) \sim (p, \lambda_1f_1+\lambda_2f_2);
$$
\item a force may be moved along its line of action:
$$
(p,f) \sim (p + \lambda f, f).
$$
\end{enumerate}
\end{dfn}

In $\E^2$, any system of forces is equivalent either to a single force or to a so called ``couple'' $(p_1, f) + (p_2, -f)$ with $p_1-p_2 \nparallel f$.

\begin{dfn}
\label{dfn:Load}
A \emph{load} on the framework $P$ is a map
$$
\begin{array}{rrcl}
F: & \cV & \to & \R^d,\\
& i & \mapsto & f_i.
\end{array}
$$
A load is called an \emph{equilibrium load} iff the system of forces $\sum_{i \in \cV} (p_i, f_i)$ is equivalent to a zero force.
\end{dfn}

A rigid body responds to an equilibrium load by interior stresses that cancel the forces of the load.

\begin{dfn}
\label{dfn:Stress}
A \emph{stress} on the framework $P$ is a map
$$
\begin{array}{rrcl}
\Omega: & \cE & \to & \R,\\
& ij & \mapsto & \omega_{ij}.
\end{array}
$$
The stress $\Omega$ is said to \emph{resolve} the load $F$ iff
\begin{equation}
\label{eqn:LoadResolv}
f_i + \sum_{j \in \cV} \omega_{ij} (p_j - p_i) = 0 \mbox{ for all }i \in \cV,
\end{equation}
where we assume $\omega_{ij} = 0$ for all $ij \notin \cE$.
\end{dfn}

We denote the vector space of equilibrium loads by $\cF_{\mathrm{eq}}$, and the vector space of resolvable loads by $\cF_{\mathrm{res}}$. It is easy to see that only an equilibrium load can be resolved: $\cF_{\mathrm{res}} \subset \cF_{\mathrm{eq}}$.

\begin{dfn}
The framework $P$ is called \emph{statically rigid} iff every equilibrium load on $P$ can be resolved.

The dimension of the quotient space $\cF_{\mathrm{eq}}/\cF_{\mathrm{res}}$ is called the number of \emph{static degrees of freedom} of the framework $P$.
\end{dfn}

\subsection{Relation between infinitesimal and static rigidity}
\label{subsec:InfStat}
Define a pairing between velocity fields and loads on the framework~$P$:
\begin{equation}
\label{eqn:Pairing}
\langle Q, F \rangle = \sum_{i \in \cV} \langle q_i, f_i \rangle.
\end{equation}
Clearly, this pairing is non-degenerate, thus it induces a duality between the space of velocity fields and the space of loads.

The following theorem provides a link between statics and kinematics of frameworks.

\begin{thm}
\label{thm:InfStat}
The pairing \eqref{eqn:Pairing} induces a duality
$$
\mathcal{Q}_{\mathrm{mot}}/\mathcal{Q}_{\mathrm{triv}} \cong \left( \mathcal{F}_{\mathrm{eq}}/\mathcal{F}_{\mathrm{res}} \right)^*
$$
between the space of non-trivial infinitesimal motions and the space of non-resolvable equilibrium loads.

In particular, a framework is infinitesimally rigid iff it is statically rigid.

For an infinitesimally flexible framework, the number of kinematic degrees of freedom is equal to the number of static degrees of freedom.
\end{thm}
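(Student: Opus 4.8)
The plan is to reduce the statement to the standard annihilator duality for nested subspaces under a nondegenerate pairing, once two orthogonality relations — the two \emph{principles of virtual work} — are in place. Identify both the space of velocity fields and the space of loads with $(\R^d)^{\cV}$; since the pairing \eqref{eqn:Pairing} is nondegenerate, it realizes the space of loads as the dual of the space of velocity fields, and the annihilator $S^{\perp}$ of any subspace $S$ of velocity fields is a well-defined subspace of loads. The two relations I would establish are
\begin{equation}
\label{eqn:PlanAnnih}
\cF_{\mathrm{res}} = \cQ_{\mathrm{mot}}^{\perp}, \qquad \cF_{\mathrm{eq}} = \cQ_{\mathrm{triv}}^{\perp}.
\end{equation}

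For the first relation I would introduce the rigidity map $R$ sending a velocity field $Q$ to $(\langle p_i - p_j,\, q_i - q_j\rangle)_{ij \in \cE}$, so that $\cQ_{\mathrm{mot}} = \ker R$ by Definition \ref{dfn:InfMot}. Writing out the work $\langle Q, F\rangle$ of a resolved load, substituting \eqref{eqn:LoadResolv}, and symmetrizing the resulting double sum using $\omega_{ij} = \omega_{ji}$ gives $\langle Q, F\rangle = \tfrac12\sum_{ij}\omega_{ij}\langle p_i - p_j,\, q_i - q_j\rangle$, which vanishes on $\cQ_{\mathrm{mot}}$ by \eqref{eqn:InfMot}; hence $\cF_{\mathrm{res}} \subseteq \cQ_{\mathrm{mot}}^{\perp}$. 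Since the transpose of $R$ sends a stress $\Omega$ to exactly the load it resolves via \eqref{eqn:LoadResolv}, one has $\cF_{\mathrm{res}} = \im R^{T}$, and the fundamental theorem of linear algebra $\im R^{T} = (\ker R)^{\perp}$ upgrades this to the equality. For the second relation, an element of $\cQ_{\mathrm{triv}}$ has the form $q_i = A p_i + b$ with $A$ skew-symmetric and $b \in \R^d$, so $\langle Q, F\rangle$ splits into a term linear in $b$ and a term linear in $A$; these vanish for all $b$ and all skew $A$ precisely when the total force $\sum_i f_i$ and the total torque of $F$ vanish. As a system of forces is equivalent to the zero force iff its total force and total torque both vanish (the content of the principles of virtual work, Lemma~\ref{lem:Dual}), this is exactly the condition $F \in \cF_{\mathrm{eq}}$; the standing assumption that the $p_i$ affinely span $\E^d$ ensures $\cQ_{\mathrm{triv}}$ has full dimension $\binom{d+1}{2}$, so no rigid motions are lost upon restriction to the vertices.

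With \eqref{eqn:PlanAnnih} in hand the theorem is formal. I would invoke the linear-algebraic fact that for nested subspaces $A \subseteq B$ of a finite-dimensional space the pairing induces a natural isomorphism $(B/A)^{*} \cong A^{\perp}/B^{\perp}$. Taking $A = \cQ_{\mathrm{triv}}$ and $B = \cQ_{\mathrm{mot}}$ and using \eqref{eqn:PlanAnnih} yields $(\cQ_{\mathrm{mot}}/\cQ_{\mathrm{triv}})^{*} \cong \cF_{\mathrm{eq}}/\cF_{\mathrm{res}}$, which is the asserted duality. The two remaining claims follow at once: the quotient $\cQ_{\mathrm{mot}}/\cQ_{\mathrm{triv}}$ is zero iff its dual $\cF_{\mathrm{eq}}/\cF_{\mathrm{res}}$ is zero, giving the equivalence of infinitesimal and static rigidity; and dual finite-dimensional spaces have the same dimension, giving the equality of the numbers of kinematic and static degrees of freedom.

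I expect the main obstacle to be the reverse inclusions in \eqref{eqn:PlanAnnih}: that every load orthogonal to all infinitesimal motions is genuinely resolvable, and that every load orthogonal to all trivial motions is genuinely an equilibrium load. The first is exactly the surjectivity statement $\im R^{T} = (\ker R)^{\perp}$ for the rigidity map, and the second rests on the precise description of the equivalence classes of systems of forces by total force and total torque. Both are packaged in the principles of virtual work; once they are granted, everything else is routine linear algebra.
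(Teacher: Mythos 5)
Your proposal is correct and follows the paper's skeleton exactly: the theorem itself is deduced from the two annihilator identities (the principles of virtual work, Lemma~\ref{lem:Dual}) together with the canonical isomorphism $(V_1/V_2)^*\cong V_2^\perp/V_1^\perp$, and your treatment of the first identity --- identifying $\cQ_{\mathrm{mot}}$ with $\ker\cR$ and $\cF_{\mathrm{res}}$ with $\im\cR^\top$ and invoking $\im\cR^\top=(\ker\cR)^\perp$ --- is the paper's argument verbatim (cf.\ Proposition~\ref{prp:RigMat}). Where you genuinely diverge is the second identity. You parametrize trivial motions as $q_i=Ap_i+b$ with $A$ skew-symmetric and reduce orthogonality to the vanishing of the resultant force and resultant torque; you then need the classical fact that a system of forces is equivalent to zero under the moves of Definition~\ref{dfn:Force} if and only if its resultant force and torque both vanish. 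The ``if'' direction of that fact (zero resultant and torque implies reducible to zero by the moves) is not free: in the paper it is essentially the injectivity half of Proposition~\ref{prp:ProjStatCorr}, established by a separate spanning/dimension argument, and it is certainly not ``the content of Lemma~\ref{lem:Dual}'' as you write --- citing Lemma~\ref{lem:Dual} there is circular, since that lemma is exactly what you are in the middle of proving. The paper's own proof of the second identity sidesteps the characterization entirely: for $(\cF_{\mathrm{eq}})^\perp\subseteq\cQ_{\mathrm{triv}}$ it uses only that the specific loads $F^{ij}$ (for all pairs $i,j$, not just edges) are equilibrium loads, so a velocity field annihilating every equilibrium load infinitesimally preserves all pairwise distances and hence extends to an infinitesimal isometry; for the reverse inclusion it checks that $\langle Q,F\rangle$ is unchanged by each elementary move when $Q$ comes from a global infinitesimal isometry. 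Your route is workable and arguably more concrete, but to make it complete you should either prove the resultant-and-torque characterization (invariance under the moves gives one direction; a spanning argument as in the proof of Proposition~\ref{prp:ProjStatCorr} gives the other) or adopt the paper's trick with the loads $F^{ij}$, which requires only the easy direction.
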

\begin{proof}
This follows from Lemma \ref{lem:Dual} and from the canonical isomorphism $(V_1/V_2)^* \cong V_2^\perp/V_1^\perp$ for any pair of vector subspaces $V_1 \supset V_2$ of a space~$V$.
\end{proof}

\begin{lem}[Principles of virtual work]
\label{lem:Dual}
Under the pairing \eqref{eqn:Pairing},
\begin{enumerate}
\item
the space of infinitesimal motions is the orthogonal complement of the space of resolvable loads:
$$
\mathcal{Q}_{\mathrm{mot}} = (\mathcal{F}_{\mathrm{res}})^\perp;
$$
\item
the space of trivial infinitesimal motions is the orthogonal complement of the space of equilibrium loads:
$$
\mathcal{Q}_{\mathrm{triv}} = (\mathcal{F}_{\mathrm{eq}})^\perp.
$$
\end{enumerate}
\end{lem}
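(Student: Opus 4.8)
The plan is to realize each of the two identities as an instance of the elementary fact $\ker(S^\top)=(\im S)^\perp$, valid for any linear map $S$ between spaces carrying non-degenerate pairings. I would exhibit the four spaces $\mathcal{Q}_{\mathrm{mot}},\mathcal{Q}_{\mathrm{triv}},\mathcal{F}_{\mathrm{res}},\mathcal{F}_{\mathrm{eq}}$ as kernels and images of two pairs of mutually adjoint maps. Throughout I identify velocity fields and loads with $(\R^d)^{\cV}$, paired by the non-degenerate form \eqref{eqn:Pairing}.

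For part (1), introduce the stress space $\mathcal{S}=\R^{\cE}$ and the \emph{resolution map} $S:\mathcal{S}\to\mathcal{F}$ sending a stress to the load it resolves, $(S\Omega)_i=\sum_j \omega_{ij}(p_i-p_j)$; by Definition~\ref{dfn:Stress} one has $\mathcal{F}_{\mathrm{res}}=\im S$. Pairing $S\Omega$ with a velocity field $Q$ and symmetrizing the ordered double sum using $\omega_{ij}=\omega_{ji}$ gives
\[
\langle Q, S\Omega\rangle=\tfrac12\sum_{i,j}\omega_{ij}\,\langle q_i-q_j,\,p_i-p_j\rangle=\sum_{ij\in\cE}\omega_{ij}\,\langle q_i-q_j,\,p_i-p_j\rangle,
\]
so that, for the natural pairing on $\mathcal{S}$, the adjoint $S^\top$ is exactly the rigidity operator $R(Q)_{ij}=\langle q_i-q_j,\,p_i-p_j\rangle$. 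By \eqref{eqn:InfMot} we have $\mathcal{Q}_{\mathrm{mot}}=\ker R=\ker S^\top=(\im S)^\perp=(\mathcal{F}_{\mathrm{res}})^\perp$. Concretely $\im S$ is spanned by the elementary loads carried by a single edge $ij$ (namely $f_i=p_i-p_j$, $f_j=p_j-p_i$, and $0$ elsewhere), and orthogonality to all of these is precisely \eqref{eqn:InfMot}.

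For part (2) I would first record the classical description of equilibrium loads: $F\in\mathcal{F}_{\mathrm{eq}}$ iff its resultant force and resultant torque vanish,
\[
\Sigma(F):=\sum_i f_i=0,\qquad T(F):=\sum_i p_i\wedge f_i=0.
\]
The reduction rules $0$--$2$ of Definition~\ref{dfn:Force} visibly leave the pair $(\Sigma,T)$ unchanged (rule~2 because $f\wedge f=0$), giving one inclusion; the converse — that a system with zero resultant wrench can actually be reduced to the zero force — is the completeness of this invariant and is the one genuinely classical input I would invoke, and the step I expect to be the main obstacle. Note that once $\Sigma(F)=0$ the torque $T(F)$ is independent of the chosen origin, so the characterization is well posed. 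Writing $\mu=(\Sigma,T)$ as a map from loads to the wrench space $\R^d\oplus\wedge^2\R^d$, we then have $\mathcal{F}_{\mathrm{eq}}=\ker\mu$.

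Finally I would identify the adjoint of $\mu$ with the map producing trivial motions. An infinitesimal isometry of $\E^d$ has the form $x\mapsto Ax+b$ with $A^\top=-A$ and $b\in\R^d$, so $\mathcal{Q}_{\mathrm{triv}}$ is the image of $\rho:(b,A)\mapsto(Ap_i+b)_i$. A short computation, discarding the symmetric part of $\sum_i p_i\otimes f_i$ against the skew matrix $A$, expresses $\langle\rho(b,A),F\rangle$ as a fixed non-degenerate bilinear pairing of $(b,A)$ with $(\Sigma(F),T(F))$, namely
\[
\langle\rho(b,A),\,F\rangle=\langle b,\,\Sigma(F)\rangle+\tfrac12\langle A,\,T(F)\rangle,
\]
so that $\rho=\mu^\top$ for the natural inner products (the precise constant is irrelevant). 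Hence $\mathcal{Q}_{\mathrm{triv}}=\im\rho=\im\mu^\top=(\ker\mu)^\perp=(\mathcal{F}_{\mathrm{eq}})^\perp$, which is part (2). The affine spanning assumption on $(p_i)$ is not needed for the lemma itself; it only guarantees that $\rho$ is injective, i.e.\ $\dim\mathcal{Q}_{\mathrm{triv}}=\binom{d+1}{2}$, which matters when counting degrees of freedom.
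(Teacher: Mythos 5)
Your argument is correct. Part (1) is essentially the paper's own proof in adjoint clothing: the paper likewise observes that $\mathcal{F}_{\mathrm{res}}$ is spanned by the elementary loads $F^{ij}$, $ij\in\cE$, and that orthogonality to $F^{ij}$ is exactly the edge equation \eqref{eqn:InfMot}. Part (2), however, takes a genuinely different route. The paper proves the two inclusions separately: for $(\mathcal{F}_{\mathrm{eq}})^\perp\subset\mathcal{Q}_{\mathrm{triv}}$ it notes that $F^{ij}$ is an equilibrium load for \emph{every} pair $i,j$ (not only edges), so a velocity field annihilating all equilibrium loads infinitesimally preserves all pairwise distances and hence extends to an infinitesimal isometry; for the reverse inclusion it checks that the virtual work $\langle Q,F\rangle$ of an infinitesimal isometry is unchanged by each reduction move of Definition~\ref{dfn:Force}, hence vanishes whenever $F\sim 0$. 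You instead describe $\mathcal{F}_{\mathrm{eq}}$ as the kernel of the wrench map $\mu=(\Sigma,T)$ and exhibit $\mathcal{Q}_{\mathrm{triv}}$ as the image of its adjoint $\rho$, reducing both inclusions to $\im\mu^\top=(\ker\mu)^\perp$. This is clean and makes the symmetry of the two principles visible, but it concentrates all the difficulty in the step you flag: the completeness of the wrench invariant, i.e.\ that $\Sigma(F)=0=T(F)$ forces $F\sim 0$. That statement is precisely the injectivity half of Proposition~\ref{prp:ProjStatCorr} (under the identification $p\wedge f\mapsto(\Sigma,T)$), which the paper proves by the dimension count that every force is a combination of the $\binom{d+1}{2}$ forces along the edges of an affinely independent point set; so the missing input is available and short, but it does need to be supplied rather than merely invoked if the proof is to be self-contained at this point of the paper. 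In exchange, your route avoids the classical fact the paper itself leaves unproved (that infinitesimal preservation of all pairwise distances extends to an infinitesimal isometry of $\E^d$), since $\im\rho=(\ker\mu)^\perp$ delivers the inclusion $(\mathcal{F}_{\mathrm{eq}})^\perp\subset\mathcal{Q}_{\mathrm{triv}}$ directly. Your closing remark is also correct: the affine spanning hypothesis is irrelevant to the lemma and only makes $\rho$ injective, which is what the dimension count $\dim\mathcal{Q}_{\mathrm{triv}}=\binom{d+1}{2}$ in the subsequent corollary uses.
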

\begin{proof}
The space of resolvable loads is spanned by the loads $(F^{ij})_{ij \in \cE}$ with components
$$
\begin{array}{rcl}
f^{ij}_i & = & p_i - p_j\\
f^{ij}_j & = & p_j - p_i\\
f^{ij}_k & = & 0 \quad \mbox{ for }k \ne i, j.
\end{array}
$$
The orthogonality condition $\langle Q, F^{ij} \rangle = 0$ is equivalent to $\langle q_i-q_j, p_i - p_j \rangle = 0$. Thus $Q \in (\mathcal{F}_{\mathrm{res}})^\perp$ iff $Q$ is an infinitesimal motion, and the first principle is proved.

Let us prove that $\mathcal{Q}_{\mathrm{triv}} \supset (\mathcal{F}_{\mathrm{eq}})^\perp$. Let $Q$ be a velocity field that annihilates every equilibrium load. The load $F^{ij}$ defined in the previous paragraph is an equilibrium load for every $i,j \in \cV$ (with $ij$ not necessarily in $\cE$). The equations $\langle Q, F^{ij} \rangle = 0$ imply that $Q$ infinitesimally preserves pairwise distances between the points $(p_i)_{i \in \cV}$. Therefore $Q$ can be extended to an infinitesimal isometry of $\E^d$, that is $Q \in \mathcal{Q}_{\mathrm{triv}}$.

Let us prove $\mathcal{Q}_{\mathrm{triv}} \subset (\mathcal{F}_{\mathrm{eq}})^\perp$. Let $Q$ be the restriction of an infinitesimal isometry of $\E^d$. We have to show that $\langle Q, F \rangle = 0$ for every equilibrium load $F$. Since the system of forces $\sum_{i \in \cV} (p_i, f_i)$ corresponding to $F$ is equivalent to zero, there is a sequence of transformations as in Definition \ref{dfn:Force} that leads from $\sum_i (p_i, f_i)$ to $0$. It is not hard to show that the number $\langle Q, F \rangle$ remains unchanged after each transformation (if a force $(p',f')$ with a new application point $p'$ appears, then we substitute for $q'$ in the expression $\langle q', f' \rangle$ the velocity vector of our global infinitesimal isometry). Since $F$ vanishes at the end, we have $\langle Q, F \rangle = 0$ also at the beginning.
\end{proof}

\begin{cor}
$$
\dim \mathcal{F}_{\mathrm{eq}} = d\,|\cV| - \binom{d+1}2
$$
\end{cor}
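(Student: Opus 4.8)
The plan is to read off $\dim \mathcal{F}_{\mathrm{eq}}$ directly from the duality in Lemma~\ref{lem:Dual}. The pairing \eqref{eqn:Pairing} is non-degenerate between two spaces of equal dimension $d\,|\cV|$, since both velocity fields and loads are maps $\cV \to \R^d$. Hence the pairing identifies each space with the dual of the other, and for any subspace $W$ of the space of velocity fields one has $\dim W^\perp = d\,|\cV| - \dim W$. By the second principle of virtual work, $\mathcal{Q}_{\mathrm{triv}} = (\mathcal{F}_{\mathrm{eq}})^\perp$; taking orthogonal complements and using that the double orthogonal complement is the identity in this finite-dimensional non-degenerate setting gives $\mathcal{F}_{\mathrm{eq}} = (\mathcal{Q}_{\mathrm{triv}})^\perp$. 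Therefore
$$
\dim \mathcal{F}_{\mathrm{eq}} = d\,|\cV| - \dim \mathcal{Q}_{\mathrm{triv}},
$$
and the whole problem reduces to computing the dimension of the space of trivial infinitesimal motions.

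For that, I would identify $\mathcal{Q}_{\mathrm{triv}}$ with the space of infinitesimal isometries of $\E^d$ via restriction to the vertices. An infinitesimal isometry has the form $x \mapsto Ax + b$ with $A$ skew-symmetric and $b \in \R^d$; the skew-symmetric matrices contribute $\binom{d}{2}$ dimensions and the translations contribute $d$, for a total of $\binom{d}{2} + d = \binom{d+1}{2}$. It then remains to check that the restriction map to the vertices is injective, so that $\dim \mathcal{Q}_{\mathrm{triv}} = \binom{d+1}{2}$.

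This injectivity is the one point that genuinely uses a hypothesis, and it is exactly where the standing assumption that the vertices $(p_i)$ affinely span $\E^d$ enters. If an affine map $x \mapsto Ax + b$ vanishes at every $p_i$, then choosing $d+1$ affinely independent vertices $p_0, \dots, p_d$ among them and subtracting gives $A(p_i - p_0) = 0$ for each $i$; since the differences $p_i - p_0$ span $\R^d$ we conclude $A = 0$ and then $b = 0$. Thus no nonzero infinitesimal isometry restricts to the zero velocity field, the restriction map is injective, and $\dim \mathcal{Q}_{\mathrm{triv}} = \binom{d+1}{2}$. Substituting this into the displayed formula yields the claim. I expect no real obstacle beyond being careful that both the dimension count for infinitesimal isometries and the injectivity argument rely on the affine-spanning assumption already in force throughout the paper.
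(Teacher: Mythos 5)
Your proof is correct and follows the same route as the paper: the paper's own argument is the one-line computation $\dim \mathcal{F}_{\mathrm{eq}} = d\,|\cV| - \dim \mathcal{Q}_{\mathrm{triv}} = d\,|\cV| - \binom{d+1}{2}$ via the second principle of virtual work, with the value of $\dim \mathcal{Q}_{\mathrm{triv}}$ taken for granted. You have simply filled in the details it leaves implicit (the double orthogonal complement step and the injectivity of restriction under the affine-spanning assumption), all of which are accurate.
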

\begin{proof}
Due to Lemma \ref{lem:Dual}, $\dim \mathcal{F}_{\mathrm{eq}} = d\,|\cV| - \dim \mathcal{Q}_{\mathrm{triv}} = d\,|\cV| - \binom{d+1}2$.
\end{proof}

Let $\Phi: \E^d \to \E^d$ be an affine isomorphism. The framework $P' = \Phi \circ P$ is called \emph{affinely equivalent} to $P$.

\begin{cor}
\label{cor:InfAff}
Infinitesimal rigidity is an affine invariant. Moreover, for any two affinely equivalent frameworks there is a canonical bijection between their infinitesimal motions that restricts to a bijection between trivial infinitesimal motions.

Explicitly, let $\Phi: x \mapsto Ax + b$ be an affine isomorphism of $\E^d$, written in an orthonormal coordinate system. Then the map that relates infinitesimal motions of $P$ with infinitesimal motions of $\Phi \circ P$ is $(A^*)^{-1}$.
\end{cor}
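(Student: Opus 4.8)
The plan is to verify the explicit formula directly and to read off all the qualitative statements from it. Write $p_i' = Ap_i + b$ for the vertices of $P' = \Phi \circ P$. Since the translation part cancels in differences, $p_i' - p_j' = A(p_i - p_j)$ for every edge $ij$. I would define the candidate correspondence pointwise by $q_i' = (A^*)^{-1} q_i$ and check that it carries the infinitesimal motions of $P$ onto those of $P'$. Indeed, using the defining property of the adjoint,
\[
\langle p_i' - p_j', q_i' - q_j' \rangle = \langle A(p_i - p_j), (A^*)^{-1}(q_i - q_j) \rangle = \langle p_i - p_j, q_i - q_j \rangle,
\]
so the edge conditions \eqref{eqn:InfMot} for $(P', Q')$ and for $(P, Q)$ coincide term by term. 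Since $(A^*)^{-1}$ is a linear isomorphism of $\R^d$, the induced pointwise map on velocity fields is a linear isomorphism, and the identity above shows it restricts to a bijection of $\cQ_{\mathrm{mot}}$ for $P$ onto $\cQ_{\mathrm{mot}}$ for $P'$.

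The main point is to show that this bijection sends $\cQ_{\mathrm{triv}}$ to $\cQ_{\mathrm{triv}}$. An infinitesimal isometry of $\E^d$ has the form $x \mapsto Sx + c$ with $S$ antisymmetric and $c \in \R^d$, so a trivial motion of $P$ is $q_i = Sp_i + c$. Substituting $p_i = A^{-1}(p_i' - b)$ and applying $(A^*)^{-1}$ gives
\[
q_i' = (A^*)^{-1} S A^{-1} p_i' + (A^*)^{-1}(c - SA^{-1}b),
\]
which is again the restriction to the vertices of $P'$ of a global affine velocity field $x \mapsto S'x + c'$ with $S' = (A^*)^{-1} S A^{-1}$. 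The crux is the algebraic fact that $S'$ is again antisymmetric: since $(A^{-1})^* = (A^*)^{-1}$, one computes $(S')^* = (A^*)^{-1} S^* A^{-1} = -S'$ using $S^* = -S$. Hence $x \mapsto S'x + c'$ is an infinitesimal isometry and $Q'$ is trivial. This conjugation computation, together with the care needed to re-express the field in terms of $p_i'$ rather than $p_i$ (which is where the translation $b$ re-enters, harmlessly), is the only step that is not immediate.

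Applying the same reasoning to $\Phi^{-1}$ shows the correspondence restricts to a bijection $\cQ_{\mathrm{triv}} \to \cQ_{\mathrm{triv}}$ as well, so it descends to an isomorphism of the quotients $\cQ_{\mathrm{mot}}/\cQ_{\mathrm{triv}}$. In particular the numbers of kinematic degrees of freedom agree and one framework is infinitesimally rigid iff the other is. I would deliberately avoid deducing this from the projective statement (Theorem \ref{thm:DS}), since that theorem is only proved later; the self-contained computation above is both shorter and serves as a warm-up for the projective argument.
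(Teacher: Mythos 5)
Your proof is correct, but it takes a genuinely different route from the paper's. You work entirely on the kinematic side: you verify directly that $q_i \mapsto (A^*)^{-1}q_i$ preserves the edge conditions \eqref{eqn:InfMot} via $\langle Au,(A^*)^{-1}v\rangle = \langle u,v\rangle$, and you check preservation of triviality by hand, through the conjugation identity $S' = (A^*)^{-1}SA^{-1}$ and the observation that this preserves antisymmetry. The paper instead argues on the static side: the definitions of equilibrium and resolvable loads in Section \ref{subsec:ClassStat} use only the affine structure of $\E^d$, so the map $\Phi^{\mathrm{stat}}: f \mapsto Af$ tautologically preserves $\cF_{\mathrm{eq}}$ and $\cF_{\mathrm{res}}$; the kinematic map is then \emph{forced} to be $(A^*)^{-1}$ by requiring compatibility with the pairing \eqref{eqn:Pairing}, and the preservation of $\cQ_{\mathrm{mot}}$ and $\cQ_{\mathrm{triv}}$ comes for free from the identities $\cQ_{\mathrm{mot}} = (\cF_{\mathrm{res}})^\perp$ and $\cQ_{\mathrm{triv}} = (\cF_{\mathrm{eq}})^\perp$ of Lemma \ref{lem:Dual}. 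Your version buys self-containedness --- it needs neither Theorem \ref{thm:InfStat} nor the statics machinery, and it makes explicit why trivial motions map to trivial motions, which the paper leaves implicit in the duality. What it loses is the conceptual point the corollary is meant to illustrate: the paper's proof is a small-scale rehearsal of the strategy used for the projective theorem (prove invariance where it is definitionally obvious, namely in statics, then dualize), and it explains \emph{where} the formula $(A^*)^{-1}$ comes from rather than merely verifying it. Your last remark about avoiding Theorem \ref{thm:DS} is well taken, but note that the paper does not use the projective theorem here either; it uses only the Euclidean duality of Section \ref{subsec:InfStat}, which precedes the corollary.
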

\begin{proof}
Static rigidity is affinely invariant in a straightforward way. Definitions in Section \ref{subsec:ClassStat} use only the affine structure of $\E^d$, and not the metric structure. Given an affine isomorphism $\Phi: x \mapsto Ax + b$, the transformation of forces $\Phi^{\mathrm{stat}}: f \mapsto Af$ maps equilibrium loads to equilibrium ones and resolvable to resolvable.

In order to obtain a transformation $\Phi^{\mathrm{kin}}$ of velocity fields, it suffices to require that $\langle \Phi^{\mathrm{kin}}(q), \Phi^{\mathrm{stat}}(f) \rangle = \langle q, f \rangle$ for any $q,f$. This implies the formula $(\Phi^{\mathrm{kin}})^{-1}: q \mapsto A^*q$.
\end{proof}

An alternative proof of the affine invariance of infinitesimal rigidity can be found in \cite{Bla20}.


\subsection{Rigidity matrix}
\label{subsec:RigMat}
The rigidity matrix is a standard tool for computing infinitesimal motions and the number of degrees of freedom of a framework.

\begin{dfn}
\label{dfn:RigMat}
The \emph{rigidity matrix} of a framework $P$ is an $\cE \times \cV$ matrix with vector entries:
$$
\cR = \; \scriptstyle{ij}
              \stackrel{\scriptstyle{i}}{\left(\begin{array}{ccc}
              & \vdots & \\
              \cdots & p_i - p_j & \cdots \\
              & \vdots &
             \end{array}\right)}.
$$
It has the pattern of the edge-vertex incidence matrix of the graph $(\cV, \cE)$, with $p_i - p_j$ on the intersection of the row $ij$ and the column $i$.
\end{dfn}

Note that the rows of $\cR$ are exactly the loads $(F^{ij})_{ij \in \cE}$ that span the space $\mathcal{F}_{\mathrm{res}}$, see the proof of Lemma \ref{lem:Dual}. The following proposition is just a reformulation of the first principle of virtual work (Lemma \ref{lem:Dual}, first part), together with its proof.

\begin{prp}
\label{prp:RigMat}
Consider $\cR$ as the matrix of a map $(\R^d)^\cV \to \R^\cE$. Then the following holds:
$$
\begin{array}{lcl}
\ker \cR & = & \mathcal{Q}_{\mathrm{mot}};\\
\im \cR^\top & = & \mathcal{F}_{\mathrm{res}}.
\end{array}
$$
\end{prp}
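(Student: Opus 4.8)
The plan is to verify both identities by writing out the action of $\cR$ and $\cR^\top$ componentwise and comparing with the defining conditions \eqref{eqn:InfMot} and \eqref{eqn:LoadResolv}. Regarding $\cR$ as a map $(\R^d)^\cV \to \R^\cE$, I first compute the image of a velocity field $Q = (q_i)_{i \in \cV}$. Since the row indexed by $ij$ carries $p_i - p_j$ in column $i$ and $p_j - p_i$ in column $j$ (all other entries of that row being zero), the $ij$-component of $\cR Q$ equals $\langle p_i - p_j, q_i \rangle + \langle p_j - p_i, q_j \rangle = \langle p_i - p_j, q_i - q_j \rangle$. Hence $\cR Q = 0$ precisely when $Q$ satisfies \eqref{eqn:InfMot} for every $ij \in \cE$, which gives $\ker \cR = \cQ_{\mathrm{mot}}$.

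For the second identity I would argue in two ways. The quick route uses the observation recorded just before the proposition: the rows of $\cR$ are exactly the loads $(F^{ij})_{ij \in \cE}$ that span $\cF_{\mathrm{res}}$. Since $\im \cR^\top$ is the column space of $\cR^\top$, that is the span of the rows of $\cR$, one obtains $\im \cR^\top = \span \{F^{ij}\} = \cF_{\mathrm{res}}$ at once. Alternatively, and in order to pin down signs, I would compute directly: for a stress $\Omega = (\omega_{ij})$ the component at vertex $i$ of $\cR^\top \Omega$ is $\sum_{j} \omega_{ij}(p_i - p_j)$, so the equation $F = \cR^\top \Omega$ reads $f_i = \sum_j \omega_{ij}(p_i - p_j)$ for every $i$, which is exactly the resolvability condition \eqref{eqn:LoadResolv}. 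Thus $F$ lies in $\im \cR^\top$ iff some stress resolves it, i.e.\ iff $F \in \cF_{\mathrm{res}}$.

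There is no genuine obstacle here: the statement is a transcription of the first principle of virtual work (Lemma~\ref{lem:Dual}) into matrix language, and both halves reduce to unwinding the definitions. The only point demanding a little care is the bookkeeping of the incidence pattern and the sign in \eqref{eqn:LoadResolv}, namely checking that the entry $p_i - p_j$ in the $(ij, i)$ slot of $\cR$ produces the term $+\omega_{ij}(p_i - p_j)$ in the vertex-$i$ component of $\cR^\top \Omega$, matching $f_i = -\sum_j \omega_{ij}(p_j - p_i)$. Once these conventions are aligned, both equalities follow immediately.
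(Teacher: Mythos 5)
Your proof is correct and follows essentially the same route as the paper, which treats the proposition as a direct transcription of the first principle of virtual work: the $ij$-component of $\cR Q$ is $\langle p_i - p_j, q_i - q_j\rangle$, and the rows of $\cR$ are exactly the loads $F^{ij}$ spanning $\cF_{\mathrm{res}}$. Your sign bookkeeping for \eqref{eqn:LoadResolv} is also consistent with the paper's conventions, so nothing is missing.
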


\begin{cor}
The framework is infinitesimally rigid iff
$$
\rk \cR = d\, |\cV| - \binom{d+1}2.
$$
\end{cor}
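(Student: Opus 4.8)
The plan is to combine Proposition \ref{prp:RigMat} with the rank-nullity theorem. Since $\cR$ represents a linear map $(\R^d)^\cV \to \R^\cE$ whose domain has dimension $d\,|\cV|$, rank-nullity gives $\dim \ker \cR = d\,|\cV| - \rk \cR$. By Proposition \ref{prp:RigMat} we have $\ker \cR = \mathcal{Q}_{\mathrm{mot}}$, so
$$
\dim \mathcal{Q}_{\mathrm{mot}} = d\,|\cV| - \rk \cR.
$$

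Next I would recall that, by Definition \ref{dfn:InfRig}, $P$ is infinitesimally rigid exactly when $\mathcal{Q}_{\mathrm{mot}} = \mathcal{Q}_{\mathrm{triv}}$. Because every trivial infinitesimal motion is in particular an infinitesimal motion, we always have the containment $\mathcal{Q}_{\mathrm{triv}} \subseteq \mathcal{Q}_{\mathrm{mot}}$; hence equality of these two spaces is equivalent to equality of their dimensions, that is, to $\dim \mathcal{Q}_{\mathrm{mot}} = \dim \mathcal{Q}_{\mathrm{triv}}$.

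The key numerical input is that $\dim \mathcal{Q}_{\mathrm{triv}} = \binom{d+1}2$, which was already established and used in the proof of the corollary computing $\dim \mathcal{F}_{\mathrm{eq}}$; it reflects that the infinitesimal isometries of $\E^d$ form a space of dimension $\binom{d+1}2$, and that the standing assumption that the vertices of $P$ span $\E^d$ affinely guarantees that no nonzero infinitesimal isometry restricts to the zero velocity field on $P$. Substituting $\dim \mathcal{Q}_{\mathrm{mot}} = d\,|\cV| - \rk \cR$ into the condition $\dim \mathcal{Q}_{\mathrm{mot}} = \binom{d+1}2$ and solving for $\rk \cR$ yields the claimed identity $\rk \cR = d\,|\cV| - \binom{d+1}2$; since each step is an equivalence, the chain runs in both directions.

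There is no genuine obstacle here: the statement is a bookkeeping consequence of rank-nullity once $\ker \cR = \mathcal{Q}_{\mathrm{mot}}$ and the constant $\dim \mathcal{Q}_{\mathrm{triv}} = \binom{d+1}2$ are in hand. The only point deserving care is the affine-spanning hypothesis, which is precisely what forces $\dim \mathcal{Q}_{\mathrm{triv}}$ to equal the full $\binom{d+1}2$ rather than a smaller number, and thus what makes the stated threshold value of $\rk \cR$ the correct one.
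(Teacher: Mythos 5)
Your proof is correct and follows essentially the same route as the paper's: rank--nullity combined with $\ker \cR = \mathcal{Q}_{\mathrm{mot}}$ from Proposition \ref{prp:RigMat}, then the equivalence of $\mathcal{Q}_{\mathrm{mot}} = \mathcal{Q}_{\mathrm{triv}}$ with the dimension count $\dim \mathcal{Q}_{\mathrm{mot}} = \binom{d+1}2$, using the inclusion of trivial motions into all motions and the affine-spanning hypothesis to pin down $\dim \mathcal{Q}_{\mathrm{triv}}$. (You even state the inclusion in the correct direction, $\mathcal{Q}_{\mathrm{triv}} \subseteq \mathcal{Q}_{\mathrm{mot}}$, where the paper's printed proof has it reversed by an evident typo.)
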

\begin{proof}
Indeed, $\rk \cR = d\, |\cV| - \dim \ker \cR$ which is equal to $d\, |\cV| - \dim \mathcal{Q}_{\mathrm{mot}}$ by Proposition \ref{prp:RigMat}. By definition, the framework is infinitesimally rigid iff $\mathcal{Q}_{\mathrm{mot}} = \mathcal{Q}_{\mathrm{triv}}$. Since $\mathcal{Q}_{\mathrm{mot}} \subset \mathcal{Q}_{\mathrm{triv}}$ and $\dim \mathcal{Q}_{\mathrm{triv}} = \binom{d+1}2$, the proposition follows.
\end{proof}

\section{Projective interpretation of rigidity}
\label{sec:Proj}
In this section we prove Theorem \ref{thm:DS}. For that, the static formulation of the infinitesimal rigidity turns out to be the most suitable. The proof amounts to redefining a force in projective terms, compatibly with Definition \ref{dfn:Force}. This is done in Section \ref{subsec:ProjStat}. In the same section we obtain formulas describing the correspondence between the loads in two projectively equivalent frameworks. In Section \ref{subsec:ProjKin} we derive from these formulas of static correspondence formulas of kinematic correspondence, using the duality from Section \ref{subsec:InfStat}.
Finally, we introduce projective analogs of notions of kinematics from Section \ref{subsec:ClassInf}.

Recall that we identify the Euclidean space $\E^d$ with the affine hyperplane $\{x^0 = 1\}$ of $\R^{d+1}$. This induces an affine embedding of $\E^d$ into $\RP^d$. We write points of $\RP^d$ as equivalence classes $[x]$ of points of $\R^{d+1} \setminus \{0\}$.

\subsection{Projective statics}
\label{subsec:ProjStat}
\begin{dfn}
\label{dfn:ProjFram}
A \emph{projective framework} with graph $(\cV,\cE)$ is a map
$$
\begin{array}{rrcl}
X: & \cV & \to & \RP^d,\\
& i    & \mapsto & [x_i]
\end{array}
$$
such that $[x_i] \ne [x_j]$ whenever $ij \in \cE$.
\end{dfn}

An affine embedding of $\E^d$ into $\RP^d$ associates a projective framework to every Euclidean framework.

\begin{dfn}
A force applied at a point $[x] \in \RP^d$ is a decomposable bivector divisible through $x$.
\end{dfn}
Thus every force at $[x]$ can be written as $x \wedge y \in \Lambda^2 \R^{d+1}$.

Let $(p,f)$ be a force in the sense of Definition \ref{dfn:Force}, i.e. $p \in \E^d \subset \R^{d+1}$, $f \in T_p \E^d \cong \R^d = \{x^0 = 0\}$. Associate with $(p,f)$ the bivector $p \wedge f$.

\begin{prp}
\label{prp:ProjStatCorr}
The extension of the map
\begin{equation}
\label{eqn:StatMap}
(p,f) \mapsto p \wedge f
\end{equation}
by linearity is well-defined and establishes an isomorphism of systems of forces on $\E^d$ with $\Lambda^2 \R^{d+1}$.
\end{prp}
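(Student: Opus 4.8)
The plan is to verify three things separately: that \eqref{eqn:StatMap} descends to the quotient by the relations of Definition~\ref{dfn:Force}, that the resulting linear map is surjective, and that it is injective. For well-definedness I would extend \eqref{eqn:StatMap} linearly to formal sums and check the three relations. Relation~0 is immediate, since $p\wedge 0=0$. Relation~1 is the bilinearity of the wedge product, $\lambda_1(p\wedge f_1)+\lambda_2(p\wedge f_2)=p\wedge(\lambda_1 f_1+\lambda_2 f_2)$. Relation~2 is the essential point: because $f$ lies in $\{x^0=0\}$ the point $p+\lambda f$ is still in $\E^d$, and
\[
(p+\lambda f)\wedge f=p\wedge f+\lambda\,(f\wedge f)=p\wedge f,
\]
since $f\wedge f=0$. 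Thus we obtain a well-defined linear map from the space of systems of forces to $\Lambda^2\R^{d+1}$.

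To analyse this map I would exploit the splitting of the target. Writing $e_0=(1,0,\dots,0)$ and $\R^d=\{x^0=0\}$, one has $\Lambda^2\R^{d+1}=(e_0\wedge\R^d)\oplus\Lambda^2\R^d$, and for $p=e_0+\bar p$ with $\bar p\in\R^d$,
\[
p\wedge f=e_0\wedge f+\bar p\wedge f.
\]
The first component records the resultant $\sum_i f_i$, the second the moment $\sum_i\bar p_i\wedge f_i$ about the origin $e_0$. Surjectivity is then easy: the image is a subspace, and a single force $(e_0,f)$ maps to $e_0\wedge f$, sweeping out $e_0\wedge\R^d$, while a couple $(p,f)+(p',-f)$ maps to $(\bar p-\bar p')\wedge f$, and letting $\bar p-\bar p'$ and $f$ range over $\R^d$ yields every decomposable bivector in $\Lambda^2\R^d$, hence all of it. So the map is onto $\Lambda^2\R^{d+1}$, a space of dimension $\binom{d+1}{2}$.

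The main obstacle is injectivity. Here I would first reduce an arbitrary system to a normal form: inserting the null system $(e_0,f)+(e_0,-f)\sim 0$ and regrouping rewrites each force as
\[
(p,f)\sim(e_0,f)+\bigl[(p,f)+(e_0,-f)\bigr],
\]
a force at the origin plus a couple. Collecting the origin-forces by Relation~1, every system becomes equivalent to one force at $e_0$ plus a sum of couples. If such a normal form has image $0$, the direct-sum decomposition forces the resultant and the total moment to vanish separately; the origin-force is then $(e_0,0)\sim 0$, and it remains to show that a sum of couples with total moment zero is equivalent to $0$. For a single couple this is routine: zero moment means the two application points lie on the common line of action, so Relation~2 slides them together and Relation~1 cancels them.

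The genuine difficulty lies in the general case, since in dimension $d\ge 4$ the moment $M\in\Lambda^2\R^d$ need not be decomposable, so a sum of couples is \emph{not} equivalent to a single couple; one must argue that the equivalence class of a couple-combination depends on $M$ alone. I expect to settle this by reducing every couple to a standard one supported on coordinate directions and checking that the $\binom{d}{2}$ standard couples together with the $d$ coordinate forces at $e_0$ span the entire space of systems of forces. This bounds its dimension by $\binom{d+1}{2}$, and combined with the surjectivity above onto the $\binom{d+1}{2}$-dimensional space $\Lambda^2\R^{d+1}$ it forces the map to be an isomorphism. As an alternative route to the same injectivity, one could invoke the virtual-work duality already in place: a system of forces annihilated by every infinitesimal isometry of $\E^d$ is equivalent to zero, and under the $e_0$-splitting the infinitesimal isometries correspond exactly to $\Lambda^2\R^{d+1}$.
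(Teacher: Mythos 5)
Your outline coincides with the paper's: check the three relations for well-definedness, show surjectivity by exhibiting enough decomposable bivectors in the image, and get injectivity by bounding the dimension of the space of force systems by $\binom{d+1}{2}$ via an explicit spanning set. The well-definedness and surjectivity parts are complete and correct (the paper treats surjectivity slightly differently, producing a preimage for each decomposable $x\wedge y$ according to whether $\span\{x,y\}\subset\R^d$, but your splitting $\Lambda^2\R^{d+1}=(e_0\wedge\R^d)\oplus\Lambda^2\R^d$ does the same job).

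The one place where you have not closed the argument is exactly the step you flag: showing that an arbitrary couple is equivalent, via Relations 0--2, to the corresponding combination of standard coordinate couples. This requires translating a couple to a new base point and rescaling its arm, which are classical but not one-line consequences of the three relations; as written, ``I expect to settle this by reducing every couple to a standard one'' is the crux left unproved. The paper sidesteps couples entirely by choosing a better spanning set, namely the $\binom{d+1}{2}$ edge forces $(p_i,p_i-p_j)$ of a simplex $p_0,\dots,p_d$: a general force $(p,f)$ is resolved by Relation 1 into components along the concurrent lines $pp_j$, each component is slid to the vertex $p_j$ by Relation 2, and is then decomposed at $p_j$ into the edge directions $p_j-p_k$ by Relation 1 again. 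That reduction really is ``easy'' and avoids the couple-normalization lemma altogether; I would recommend it over your normal form. Your fallback via the principles of virtual work is in fact legitimate and non-circular: Lemma \ref{lem:Dual} is proved independently of this proposition, taking orthogonal complements in its second part gives $(\cQ_{\mathrm{triv}})^\perp=\cF_{\mathrm{eq}}$, and the pairing of a force system with an infinitesimal isometry $x\mapsto A\bar x+b$ visibly factors through $\sum_i p_i\wedge f_i$; so a system with vanishing bivector is annihilated by all trivial motions and hence is an equilibrium load, i.e.\ equivalent to zero (one should add dummy points with zero forces if the application points do not affinely span $\E^d$). If you keep your couple-based route, that alternative is the cleanest way to finish; otherwise switch to the simplex spanning set.
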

\begin{proof}
The extension is well-defined since the equivalence relations from Definition \ref{dfn:Force} are respected by the map \eqref{eqn:StatMap}.

Let us prove that the map is surjective. It suffices to show that any decomposable bivector $x \wedge y \in \Lambda^2 \R^{d+1}$ is an image of a system of forces. If the plane spanned by $x$ and $y$ is not contained in $\R^d$, then there is a point $p \in \span\{x,y\} \cap \E^d$, and hence $x \wedge y = p \wedge f$ for an appropriate $f \in \R^d$. Otherwise, $x,y \in \R^d$. In this case represent $x$ as $x_1+x_2$ with $x_1,x_2 \notin \R^d$. Then the sum $x_1 \wedge y +x_2 \wedge y$ corresponds to a force couple.

To prove the injectivity, it suffices to show that the space of systems of forces on $\E^d$ has dimension at most $\binom{d+1}{2} = \dim \Lambda^2 \R^{d+1}$. This follows from an easy fact that any force can be written as a linear combination of forces from the set $\{(p_i, p_i - p_j)|\: i < j\}$, where $p_0, \ldots, p_d$ is a set of affinely independent points in $\E^d$.
\end{proof}

Due to Proposition \ref{prp:ProjStatCorr}, the following definitions are compatible with definitions of Section \ref{subsec:ClassStat}.

\begin{dfn}
A load on a projective framework $X$ is a map
$$
\begin{array}{rrcl}
G: & \cV & \to & \Lambda^2 \R^{d+1},\\
& i    & \mapsto & g_i,
\end{array}
$$
where $g_i$ is a force at $[x_i]$. A load is called an equilibrium load iff $\sum_{i \in \cV} g_i = 0$.
\end{dfn}

\begin{dfn}
Let $X$ be a projective framework with graph $(\cV, \cE)$. Denote by $\cE_{\mathrm{or}}$ the set of oriented edges: $\cE_{\mathrm{or}} = \{(i,j)|\: ij \in \cE\}.$ A stress on $X$ is a map
$$
\begin{array}{rrcl}
W: & \cE_{\mathrm{or}} & \to & \Lambda^2 \R^{d+1},\\
& (i,j)    & \mapsto & w_{ij}
\end{array}
$$
such that $w_{ij} \in \Lambda^2 \span\{x_i,x_j\}$ and $w_{ij} = -w_{ji}$.

The stress $W$ is said to resolve the load $G$ iff for all $i \in \cV$ we have
$$
g_i = \sum_j w_{ij}.
$$
\end{dfn}

\begin{prp}
\label{prp:DS_Stat}
Let $P$ and $P'$ be two frameworks in $\E^d \subset \RP^d$ such that $P' = \Phi \circ P$, where $\Phi: \RP^d \to \RP^d$ is a projective map. Then there is an isomorphism between the spaces of equilibrium loads on $P$ and $P'$ that maps resolvable loads to resolvable ones.
\end{prp}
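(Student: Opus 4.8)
The plan is to realize the correspondence as the second exterior power of the linear map underlying $\Phi$. Since $\Phi$ is a projective isomorphism of $\RP^d$, it is induced by some $L \in GL(\R^{d+1})$, unique up to scaling, and the hypothesis that $\Phi$ sends no vertex to infinity guarantees that $P' = \Phi \circ P$ is again a genuine framework in $\E^d$. Let $\Lambda^2 L : \Lambda^2 \R^{d+1} \to \Lambda^2 \R^{d+1}$ denote the linear isomorphism determined on decomposables by $x \wedge y \mapsto Lx \wedge Ly$ (with inverse $\Lambda^2(L^{-1})$). I claim that $\Phi^{\mathrm{stat}} := \Lambda^2 L$ is the desired isomorphism, so that the entire proof reduces to checking that this single functorial map respects the three relevant structures.

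First I would verify that $\Phi^{\mathrm{stat}}$ carries forces on $P$ to forces on $P'$ vertex by vertex. If $[x_i]$ is the $i$-th vertex of the projective framework of $P$, a force there has the form $x_i \wedge y$, and $\Phi^{\mathrm{stat}}(x_i \wedge y) = Lx_i \wedge Ly$ is a decomposable bivector divisible by $Lx_i$, i.e. a force at $[Lx_i] = \Phi([x_i])$, the $i$-th vertex of $P'$; running over all $y$ shows this is a bijection onto the forces at $[Lx_i]$. The representative ambiguity here is harmless, since a force at a projective point is intrinsic to the line, so the fact that $Lx_i$ is merely a scalar multiple of the normalized representative of $\Phi(p_i)$ is irrelevant. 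Consequently $\Phi^{\mathrm{stat}}$ induces a bijection $G \mapsto G'$ between loads on $P$ and loads on $P'$, with $g'_i = \Phi^{\mathrm{stat}}(g_i)$.

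Next I would treat equilibrium and resolvability. Equilibrium is the single condition $\sum_i g_i = 0$ in $\Lambda^2 \R^{d+1}$, so by linearity $\sum_i g'_i = \Phi^{\mathrm{stat}}\left(\sum_i g_i\right)$; as $\Phi^{\mathrm{stat}}$ is an isomorphism, $G$ is in equilibrium iff $G'$ is, and $\Phi^{\mathrm{stat}}$ restricts to an isomorphism between the spaces of equilibrium loads. For resolvability, given a stress $W = (w_{ij})$ resolving $G$, I would set $w'_{ij} := \Phi^{\mathrm{stat}}(w_{ij})$. Then $w'_{ij} \in \Lambda^2 \span\{Lx_i, Lx_j\}$ because $L\,\span\{x_i,x_j\} = \span\{Lx_i, Lx_j\}$; the antisymmetry $w'_{ij} = -w'_{ji}$ follows from linearity; and $\sum_j w'_{ij} = \Phi^{\mathrm{stat}}\left(\sum_j w_{ij}\right) = \Phi^{\mathrm{stat}}(g_i) = g'_i$, so $W'$ resolves $G'$. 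Applying the same construction with $L^{-1}$ gives the converse, hence resolvable loads correspond exactly to resolvable loads.

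The construction is essentially mechanical once $\Phi^{\mathrm{stat}} = \Lambda^2 L$ is in place; the only point requiring genuine care — and the step I would state most carefully — is the compatibility check ensuring that the projective notions of force, equilibrium, and resolvability specialize to the Euclidean ones of Definitions \ref{dfn:Load} and \ref{dfn:Stress}, so that an isomorphism proved at the projective level really settles the statement about the Euclidean frameworks $P$ and $P'$. This compatibility is exactly what Proposition \ref{prp:ProjStatCorr} is designed to supply.
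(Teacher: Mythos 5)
Your proposal is correct and follows essentially the same route as the paper: choose a linear representative $L \in GL(\R^{d+1})$ of $\Phi$, let the induced map $\Lambda^2 L$ on $\Lambda^2 \R^{d+1}$ act on projective loads and stresses, and invoke Proposition \ref{prp:ProjStatCorr} to transfer the resulting isomorphism back to the Euclidean loads on $P$ and $P'$. You merely spell out in more detail the verifications (forces at $[x_i]$ go to forces at $[Lx_i]$, equilibrium and resolvability are preserved by linearity) that the paper compresses into one sentence.
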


\begin{proof}
Choose a representative $M \in GL(\R^{d+1})$ for~$\Phi$ and denote by $X$ and $X'$ the projective frameworks associated to $P$ and $P'$.

The map $M$ induces a map $M_*: \Lambda^2 \R^{d+1} \to \Lambda^2 \R^{d+1}$. Being a linear isomorphism, $M_*$ maps equilibrium loads on $X$ to equilibrium loads on $X'$, and resolvable ones to resolvable ones. Due to Proposition \ref{prp:ProjStatCorr}, the (projective) loads on $X$, respectively $X'$, nicely correspond to (Euclidean) loads on $P$, respectively $P'$. This yields the desired isomorphism between the spaces of loads on $P$ and $P'$.
\end{proof}

Denote by $\Phi^{\mathrm{stat}}$ the isomorphism between the spaces of loads on $P$ and $P'$ constructed in the proof of Proposition \ref{prp:DS_Stat}. It consists of a family of isomorphisms
$$
\Phi^{\mathrm{stat}}_p: T_p \E^d \to T_{\Phi(p)} \E^d, \quad p \in \E^d.
$$
Since the construction involves the choice of a representative $M$, the isomorphism $\Phi^{\mathrm{stat}}$ is determined only up to scaling. The next two propositions describe $\Phi^{\mathrm{stat}}$ explicitly.

\begin{prp}
\label{prp:A_p1}
Let $L \subset \E^d$ be the hyperplane that is sent to infinity by the projective map $\Phi$. Then
\begin{equation}
\label{eqn:A_p1}
\Phi^{\mathrm{stat}}_p(f) = h_L(p) h_L(p+f) \cdot \left(\Phi(p+f) - \Phi(p)\right), \quad \mbox{if } p+f \notin L,
\end{equation}
where $h_L$ denotes the signed distance to the hyperplane $L$.

In other words: to obtain $\Phi^{\mathrm{stat}}_p(f)$, map the application point and the endpoint of the vector $(p,f)$ by the map $\Phi$, and scale the resulting vector by the product of distances of these points from the hyperplane $L$.
\end{prp}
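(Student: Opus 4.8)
The plan is to unwind the definition of $\Phi^{\mathrm{stat}}$ from the proof of Proposition \ref{prp:DS_Stat} --- the action of $M_*$ on the bivector $p \wedge f$ representing the force $(p,f)$ --- and to read the result back off in the affine chart $\{x^0 = 1\}$. The starting observation is that a force bivector equals the wedge of the two endpoints of the vector. Since $f = (p+f) - p$ and $p \wedge p = 0$, we have
$$
p \wedge f = p \wedge (p+f),
$$
where $p$ and $p+f$ are regarded as vectors of $\R^{d+1}$ lying in $\{x^0 = 1\}$.

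Next I would apply $M_*$ and renormalize. By definition $M_*(p \wedge (p+f)) = (Mp) \wedge (M(p+f))$. Write $\ell(x) := (Mx)^0$ for the $0$-th coordinate of $Mx$; this is a nonzero linear functional on $\R^{d+1}$ since $M$ is invertible. The normalized representatives of the image points are $\Phi(p) = Mp/\ell(p)$ and $\Phi(p+f) = M(p+f)/\ell(p+f)$, both in $\{x^0 = 1\}$, so
$$
(Mp) \wedge (M(p+f)) = \ell(p)\,\ell(p+f)\; \Phi(p) \wedge \Phi(p+f).
$$
Because $\Phi(p)$ and $\Phi(p+f)$ both lie in $\{x^0 = 1\}$, their difference lies in $\{x^0 = 0\}$, and applying the identity $p \wedge (p+f) = p \wedge f$ in reverse gives
$$
\Phi(p) \wedge \Phi(p+f) = \Phi(p) \wedge \bigl(\Phi(p+f) - \Phi(p)\bigr),
$$
which is precisely the bivector of the force $\bigl(\Phi(p),\, \Phi(p+f) - \Phi(p)\bigr)$. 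Reading off the force vector through the inverse of the correspondence \eqref{eqn:StatMap} yields
$$
\Phi^{\mathrm{stat}}_p(f) = \ell(p)\,\ell(p+f)\,\bigl(\Phi(p+f) - \Phi(p)\bigr).
$$

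It remains to identify $\ell$ with the signed distance $h_L$. Restricted to $\{x^0 = 1\}$ the linear functional $\ell$ becomes an affine function on $\E^d$, and it vanishes exactly where $\Phi$ sends a point to infinity, that is, on the hyperplane $L$. Any two affine functions with zero set $L$ are proportional by a nonzero constant, so $\ell = c\,h_L$. Since $\Phi^{\mathrm{stat}}$ is determined only up to scaling (it depends on the choice of the representative $M$, and replacing $M$ by $\lambda M$ multiplies $\ell$ by $\lambda$), I am free to rescale $M$ so that $c = 1$. This gives the claimed formula \eqref{eqn:A_p1}; the hypothesis $p + f \notin L$ is exactly the condition $\ell(p+f) \ne 0$ ensuring that $\Phi(p+f)$ is a finite point.

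The one step requiring care is this last identification: one has to recognize that the $0$-th coordinate of $Mx$ is, up to a scalar, the affine functional cutting out $L$, and then invoke the scaling freedom of $\Phi^{\mathrm{stat}}$ to normalize that scalar to one. Everything preceding it is a direct manipulation in $\Lambda^2 \R^{d+1}$.
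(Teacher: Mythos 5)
Your proposal is correct and follows essentially the same route as the paper: both compute $M_*(p\wedge f)=M_*(p\wedge(p+f))$, renormalize the images to the affine chart $\{x^0=1\}$ picking up the factors $\ell(p)\,\ell(p+f)$, and then rescale the representative $M$ so that the normalizing functional coincides with $h_L$. The only difference is that you spell out the step the paper dismisses as ``not hard to see'' (that the $0$-th coordinate of $Mx$, restricted to $\E^d$, is an affine function vanishing exactly on $L$ and hence proportional to $h_L$), which is a welcome addition rather than a deviation.
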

If $(p,f)$ is such that $p+f \in L$, then \eqref{eqn:A_p1} contains an indeterminacy. In this case the map $\Phi^{\mathrm{stat}}_p$ can be extended to $f$ by continuity or by linearity.

\begin{proof}
Consider $P$ as a projective framework. Choose a representative $M \in GL(\R^{d+1})$ of $\Phi$. The vector $\Phi^{\mathrm{stat}}_p(f)$ is uniquely determined by the equation
\begin{equation}
\label{eqn:CondA_p}
M_*(p \wedge f) = \Phi(p) \wedge \Phi^{\mathrm{stat}}_p(f)
\end{equation}
and the condition $\Phi^{\mathrm{stat}}_p(f) \in \R^d$.

Denote $x': = M(p)$. Then we have
$$
x' = \lambda \cdot \Phi(p).
$$
It is not hard to see that
$$
\lambda = c \cdot h_L(p)
$$
for some constant $c$ independent of $p$. Thus we have
\begin{eqnarray*}
M_*(p \wedge f) & = & M_*(p \wedge (p+f)) =  c^2 h_L(p) h_L(p+f) \cdot \Phi(p) \wedge \Phi(p+f)\\
& = & c^2 h_L(p) h_L(p+f) \cdot \Phi(p) \wedge \left(\Phi(p+f) - \Phi(p)\right).
\end{eqnarray*}
Choosing $M$ so that $c=1$, we obtain \eqref{eqn:A_p1} from \eqref{eqn:CondA_p}.
\end{proof}

\begin{prp}
\begin{equation}
\label{eqn:A_p2}
\Phi^{\mathrm{stat}}_p = h_L^2(p) \cdot \mathrm{d}\Phi_p,
\end{equation}
where $\mathrm{d}\Phi_p$ is the differential of the map $\Phi$ at the point $p$.
\end{prp}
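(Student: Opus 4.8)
The plan is to recover equation~\eqref{eqn:A_p2} from the explicit description in Proposition~\ref{prp:A_p1} by taking the limit as $f \to 0$. The key observation is that $\mathrm{d}\Phi_p$ is, by definition, the linearization of $\Phi$ at $p$, so the formula~\eqref{eqn:A_p1} should collapse to a multiple of the differential once the endpoint $p+f$ is infinitesimally close to the application point.

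First I would rewrite the finite difference appearing in~\eqref{eqn:A_p1}. Write $f = t v$ for a fixed direction $v \in \R^d$ and a small scalar $t$, so that
$$
\Phi^{\mathrm{stat}}_p(tv) = h_L(p)\, h_L(p+tv)\cdot\bigl(\Phi(p+tv)-\Phi(p)\bigr).
$$
Since $\Phi^{\mathrm{stat}}_p$ is linear, its value on $v$ can be extracted by dividing by $t$ and letting $t \to 0$:
$$
\Phi^{\mathrm{stat}}_p(v) = \lim_{t\to 0}\; h_L(p)\, h_L(p+tv)\cdot \frac{\Phi(p+tv)-\Phi(p)}{t}.
$$
Now the two factors separate in the limit. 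The difference quotient $\frac{\Phi(p+tv)-\Phi(p)}{t}$ tends to $\mathrm{d}\Phi_p(v)$ by the definition of the differential, while $h_L(p+tv) \to h_L(p)$ because $h_L$ is an affine (hence continuous) function. Multiplying the limits gives $\Phi^{\mathrm{stat}}_p(v) = h_L^2(p)\cdot \mathrm{d}\Phi_p(v)$, which is exactly~\eqref{eqn:A_p2} read off on an arbitrary vector $v$.

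The one point requiring a little care is the indeterminacy flagged after Proposition~\ref{prp:A_p1}: formula~\eqref{eqn:A_p1} is literally valid only when $p+f \notin L$, i.e. when $h_L(p+f) \neq 0$. For the limiting argument this is harmless, since for $t$ small enough $p+tv$ stays off $L$ (as $h_L(p) \neq 0$ — the application point $p$ is a vertex, hence not sent to infinity), so the difference quotient is defined along the whole approach. The resulting linear map $h_L^2(p)\cdot\mathrm{d}\Phi_p$ is then the unique linear extension to all of $T_p\E^d$, matching the extension-by-linearity prescription. The main (though modest) obstacle is simply ensuring the two factors genuinely decouple in the limit and that one is entitled to use the finite-difference formula on a full neighborhood of $t=0$; no deeper difficulty arises, since the statement is essentially the infinitesimal shadow of Proposition~\ref{prp:A_p1}.
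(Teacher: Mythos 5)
Your proposal is correct and is essentially the paper's own (primary) proof, which reads: ``This follows from \eqref{eqn:A_p1} by replacing $f$ with $tf$ and taking the limit as $t \to 0$'' --- you have simply spelled out the linearity, continuity, and differentiability steps that justify that one-line argument. (The paper also offers a second, independent proof via an equilibrium-of-forces argument, but your route matches the first one.)
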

\begin{proof}
This follows from \eqref{eqn:A_p1} by replacing $f$ with $tf$ and taking the limit as $t \to 0$.

There is also a simple direct proof. From the proof of Proposition \ref{prp:DS_Stat}, it is immediate that the vectors $\Phi^{\mathrm{stat}}_p(f)$ and $\mathrm{d}\Phi_p(f)$ are collinear for every~$f$. Since $\Phi^{\mathrm{stat}}_p$ and $\mathrm{d}\Phi_p$ are linear maps, this implies
\begin{equation}
\label{eqn:Prop}
\Phi^{\mathrm{stat}}_p = \lambda(p) \cdot \mathrm{d}\Phi_p,
\end{equation}
and it remains to determine the function $\lambda(p)$. Consider two arbitrary points $p_1$ and $p_2$ that are not mapped to infinity by $\Phi$ and the forces $p_2-p_1$ at $p_1$ and $p_1-p_2$ at $p_2$. Since these forces are in equilibrium, so must be their images. Thus we have
\begin{equation}
\label{eqn:condA_p2}
\frac{\lambda(p_1)}{\lambda(p_2)} = \frac{\|d\Phi_{p_2}(p_1-p_2)\|}{\|d\Phi_{p_1}(p_1-p_2)\|}.
\end{equation}
To compute the right hand side, restrict the map $\Phi$ to the line $p_1p_2$. In a coordinate system with the origin at the intersection point of $p_1p_2$ with the hyperplane $L$, this restriction takes the form $x \mapsto c/x$. Since the derivative of $c/x$ is proportional to $x^{-2}$, and $x$ is proportional to $h_L$, \eqref{eqn:Prop} and \eqref{eqn:condA_p2} imply \eqref{eqn:A_p2} (we can forget about $c$ because $\Phi^{\mathrm{stat}}$ is defined up to scaling).
\end{proof}

\subsection{Projective kinematics}
\label{subsec:ProjKin}
\begin{prp}
\label{prp:DS_Kin}
Let $P$ and $P'$ be two frameworks in $\E^d \subset \RP^d$ such that $P' = \Phi \circ P$, where $\Phi: \RP^d \to \RP^d$ is a projective map.  Then there is an isomorphism $\Phi^{\mathrm{kin}}$ between the infinitesimal motions of $P$ and $P'$ that maps trivial infinitesimal motions to trivial ones.

The map $\Phi^{\mathrm{kin}}$ consists of a family of isomorphisms $\Phi^{\mathrm{kin}}_p: T_p\E^d \to T_{\Phi(p)}\E^d$ given by
\begin{equation}
\label{eqn:Phi^kin}
\Phi^{\mathrm{kin}}_p = h_L^{-2}(p) \cdot (\mathrm{d}\Phi_p^{-1})^*,
\end{equation}
where $h_L$ denotes the signed distance to the hyperplane $L$ sent to infinity by~$\Phi$.
\end{prp}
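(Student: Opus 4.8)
The plan is to obtain $\Phi^{\mathrm{kin}}$ as the pairing-dual of the static correspondence $\Phi^{\mathrm{stat}}$ furnished by Proposition~\ref{prp:DS_Stat}, in exactly the way trivial and non-trivial motions were dualized to resolvable and equilibrium loads in Theorem~\ref{thm:InfStat}; the explicit formula then falls out of \eqref{eqn:A_p2}. This is the projective analogue of the affine argument in Corollary~\ref{cor:InfAff}. First I would \emph{define} $\Phi^{\mathrm{kin}}$ by demanding that it preserve the pairing \eqref{eqn:Pairing}, i.e. $\langle \Phi^{\mathrm{kin}}(Q), \Phi^{\mathrm{stat}}(F)\rangle = \langle Q, F\rangle$ for all velocity fields $Q$ on $P$ and all loads $F$ on $P$. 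Since \eqref{eqn:Pairing} is non-degenerate and $\Phi^{\mathrm{stat}}$ is an isomorphism of the full space of loads on $P$ onto that on $P'$, this relation determines a unique isomorphism $\Phi^{\mathrm{kin}} = \left((\Phi^{\mathrm{stat}})^*\right)^{-1}$ of velocity fields.

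The step carrying the actual content is to check that $\Phi^{\mathrm{kin}}$ sends $\cQ_{\mathrm{mot}}(P)$ to $\cQ_{\mathrm{mot}}(P')$ and $\cQ_{\mathrm{triv}}(P)$ to $\cQ_{\mathrm{triv}}(P')$. By Lemma~\ref{lem:Dual} one has $\cQ_{\mathrm{mot}} = \cF_{\mathrm{res}}^\perp$ and $\cQ_{\mathrm{triv}} = \cF_{\mathrm{eq}}^\perp$ for each framework, and Proposition~\ref{prp:DS_Stat} says $\Phi^{\mathrm{stat}}$ restricts to isomorphisms $\cF_{\mathrm{res}}(P)\to\cF_{\mathrm{res}}(P')$ and $\cF_{\mathrm{eq}}(P)\to\cF_{\mathrm{eq}}(P')$. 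Then for $Q\in\cF_{\mathrm{res}}(P)^\perp$ and any $F'=\Phi^{\mathrm{stat}}(F)\in\cF_{\mathrm{res}}(P')$ I compute $\langle \Phi^{\mathrm{kin}}(Q),F'\rangle = \langle Q,F\rangle = 0$, so $\Phi^{\mathrm{kin}}(Q)\in\cF_{\mathrm{res}}(P')^\perp = \cQ_{\mathrm{mot}}(P')$; the identical computation with $\cF_{\mathrm{eq}}$ handles the trivial motions, and running the same reasoning for $\Phi^{-1}$ shows both assignments are bijective. The only real obstacle here is bookkeeping: keeping the inverse–adjoint convention straight and confirming that the dual of a map preserving the nested pair $\cF_{\mathrm{res}}\subset\cF_{\mathrm{eq}}$ preserves the reverse-nested pair $\cQ_{\mathrm{mot}}\supset\cQ_{\mathrm{triv}}$ of orthogonal complements.

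It remains to extract \eqref{eqn:Phi^kin}. Because the pairing \eqref{eqn:Pairing} splits as a sum over vertices and $\Phi^{\mathrm{stat}}$ is the vertexwise family $\Phi^{\mathrm{stat}}_p$, the defining relation reduces pointwise to $\langle \Phi^{\mathrm{kin}}_p(q), \Phi^{\mathrm{stat}}_p(f)\rangle = \langle q,f\rangle$ for all $q,f\in T_p\E^d$, that is $\Phi^{\mathrm{kin}}_p = \left((\Phi^{\mathrm{stat}}_p)^{-1}\right)^*$. Substituting $\Phi^{\mathrm{stat}}_p = h_L^2(p)\cdot \mathrm{d}\Phi_p$ from \eqref{eqn:A_p2} and using that the real scalar $h_L^2(p)$ commutes with both inversion and adjunction gives $\Phi^{\mathrm{kin}}_p = h_L^{-2}(p)\cdot(\mathrm{d}\Phi_p^{-1})^*$, which is \eqref{eqn:Phi^kin}. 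One should note only that, since $\Phi^{\mathrm{stat}}$ is fixed up to a global scalar, $\Phi^{\mathrm{kin}}$ is determined up to the reciprocal scalar; this is harmless, as rescaling does not change which infinitesimal motions are trivial.
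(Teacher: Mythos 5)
Your proposal is correct and follows exactly the route the paper intends: its one-line proof cites Theorem~\ref{thm:InfStat}, Proposition~\ref{prp:DS_Stat} and formula~\eqref{eqn:A_p2}, and your argument is precisely the spelled-out version of that — defining $\Phi^{\mathrm{kin}}$ as the inverse adjoint of $\Phi^{\mathrm{stat}}$ under the pairing~\eqref{eqn:Pairing}, transporting the orthogonal-complement characterizations of $\cQ_{\mathrm{mot}}$ and $\cQ_{\mathrm{triv}}$ from Lemma~\ref{lem:Dual}, and reading off~\eqref{eqn:Phi^kin} from~\eqref{eqn:A_p2}. No gaps; your version is simply more explicit than the paper's.
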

\begin{proof}
This is a direct consequence of Theorem \ref{thm:InfStat}, Proposition \ref{prp:DS_Stat} and formula \eqref{eqn:A_p2}.
\end{proof}

For the sake of completeness and for the reason of curiosity, let us find the projective counterparts to the notions of kinematics.

Let $X$ be a framework in $\RP^d$ with graph $(\cV, \cE)$.
\begin{dfn}
A velocity vector at a point $[x] \in \RP^d$ is an element of the vector space $(\Lambda^2 \R^{d+1})^*/\Lambda^2 x^\perp$, where $x^\perp \subset (\R^{d+1})^*$ denotes the orthogonal complement of $x$.
\end{dfn}

Here is the motivation for this definition.

\begin{lem}
For a projective framework, the vector space of velocities at $[x]$ is dual to the vector space of forces at $[x]$.
\end{lem}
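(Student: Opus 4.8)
The lemma asserts that the space of velocities at $[x]$, defined as $(\Lambda^2 \R^{d+1})^*/\Lambda^2 x^\perp$, is dual to the space of forces at $[x]$. Recall that a force at $[x]$ is a decomposable bivector divisible through $x$, i.e. an element of the form $x \wedge y$. My plan is first to identify the space of forces at $[x]$ concretely as a subspace (or quotient) of $\Lambda^2 \R^{d+1}$, and then to exhibit the claimed velocity space as its dual by a direct linear-algebra computation of orthogonal complements.

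**Identifying the force space.**

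Let $W_x = \{x \wedge y \mid y \in \R^{d+1}\} \subset \Lambda^2\R^{d+1}$ denote the set of forces at $[x]$. Although individual forces are decomposable, $W_x$ is in fact a linear subspace: if $x\wedge y_1$ and $x \wedge y_2$ are forces, then $x\wedge y_1 + x \wedge y_2 = x \wedge (y_1+y_2)$ is again a force. The map $y \mapsto x \wedge y$ has kernel $\span\{x\}$, so $W_x \cong \R^{d+1}/\span\{x\}$ and $\dim W_x = d$. This matches the dimension of $T_p\E^d \cong \R^d$, as it should by Proposition~\ref{prp:ProjStatCorr}.

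**Computing the dual via orthogonal complements.**

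The plan is to realize the dual of $W_x$ inside $(\Lambda^2\R^{d+1})^*$ using the standard fact that for a subspace $W \subset V$ one has $W^* \cong V^*/W^\perp$, where $W^\perp \subset V^*$ is the annihilator. Here $V = \Lambda^2\R^{d+1}$, so $W_x^* \cong (\Lambda^2\R^{d+1})^*/W_x^\perp$. It therefore suffices to prove that the annihilator $W_x^\perp$ coincides with $\Lambda^2 x^\perp$, where $x^\perp \subset (\R^{d+1})^*$ is the hyperplane of covectors killing $x$. A bidual element of $(\Lambda^2\R^{d+1})^*$ can be written (using the natural pairing of $\Lambda^2 V^*$ with $\Lambda^2 V$) as a sum of terms $\alpha \wedge \beta$; such a term annihilates all of $W_x$ precisely when $(\alpha\wedge\beta)(x \wedge y) = 0$ for every $y$. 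Expanding the pairing gives $\alpha(x)\beta(y) - \alpha(y)\beta(x) = 0$ for all $y$, which forces $\alpha(x) = \beta(x) = 0$, i.e.\ $\alpha, \beta \in x^\perp$. Thus $W_x^\perp = \Lambda^2 x^\perp$ exactly, and the quotient $(\Lambda^2\R^{d+1})^*/\Lambda^2 x^\perp$ is canonically $W_x^*$, the dual of the space of forces.

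**Main obstacle.**

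The routine part is the orthogonal-complement computation; the one point demanding care is the identification of $W_x^\perp$ with $\Lambda^2 x^\perp$ rather than merely an inclusion, since a general element of $(\Lambda^2\R^{d+1})^*$ is a \emph{sum} of decomposables and one must check that the annihilation condition, imposed on the whole sum against every $x \wedge y$, still pins the element down to lie in $\Lambda^2 x^\perp$. I expect this to follow cleanly by choosing a basis adapted to the splitting $(\R^{d+1})^* = x^\perp \oplus \span\{\xi\}$ with $\xi(x) = 1$, expanding an arbitrary cotensor in the induced basis of $\Lambda^2(\R^{d+1})^*$, and reading off that the annihilation conditions kill exactly the components involving $\xi$. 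This reduces the whole lemma to a finite-dimensional dimension count confirming $\dim = \binom{d+1}{2} - \binom{d}{2} = d = \dim W_x$.
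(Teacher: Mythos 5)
Your proof is correct and follows exactly the paper's route: identify the force space as the subspace $x \wedge \R^{d+1} \subset \Lambda^2\R^{d+1}$, invoke the canonical isomorphism $W^* \cong V^*/W^\perp$, and check that $(x \wedge \R^{d+1})^\perp = \Lambda^2 x^\perp$. The only difference is that you spell out the annihilator computation (which the paper states without proof), and your adapted-basis argument for handling sums of decomposables does close that point correctly.
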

\begin{proof}
Indeed, the space of forces at $x$ is $x \wedge \R^{d+1} \subset \Lambda^2 \R^{d+1}$. For a subspace $W$ of a vector space $V$, there is a canonical isomorphism $W^* \cong V^*/W^\perp$. Since $(x \wedge \R^{d+1})^\perp = \Lambda^2 x^\perp$, the proposition follows.
\end{proof}

\begin{dfn}
A velocity field $(\tau_i)_{i \in \cV}$ on a projective framework $X$ is called an infinitesimal motion iff
\begin{equation}
\label{eqn:ProjInfMot}
\langle x_i \wedge x_j, \tau_i - \tau_j \rangle = 0 \mbox{ for every }ij \in \cE.
\end{equation}
An infinitesimal motion is called trivial iff there exists $\tau \in (\Lambda^2 \R^{d+1})^*$ such that $\tau_i = \tau + \Lambda^2 x_i^\perp$ for all $i \in \cV$.
\end{dfn}
Note that the difference $\tau_i - \tau_j \in (\Lambda^2 \R^{d+1})^*/(x_i \wedge x_j)^\perp$ is well-defined since $\Lambda^2 x_i^\perp \subset (x_i \wedge x_j)^\perp \supset \Lambda^2 x_j^\perp$.

Let us establish a correspondence with the notions of Section~\ref{subsec:ClassInf}. Recall that the Euclidean space $\E^d$ is identified with the hyperplane $\{x^0 = 1\} \subset \R^{d+1}$. Consider a framework $P$ in $\E^d$ as a projective framework $X$ with $x_i = p_i$. To any classical velocity vector $q \in T_p \E^d$ associate a projective velocity vector $\tau \in (p \wedge \R^{d+1})^*$ given by
\begin{equation}
\label{eqn:q_to_tau}
\langle p \wedge y, \tau  \rangle := \langle y, q \rangle \mbox{ for every }y \in T_p\E^d.
\end{equation}
(The angle brackets at the right hand side mean the scalar product in $T_p \E^d$.) Conversely, for every $\tau \in (\Lambda^2 \R^{d+1})^*/\Lambda^2 p^\perp$ consider the (well-defined) covector $p \lrcorner \tau \in p^\perp \subset (\R^{d+1})^*$. Restrict $p \lrcorner \tau$ to $\R^d$, identify $\R^d$ with $T_p \E^d$ by parallel translation, and identify $T_p \E^d$ with $T^*_p \E^d$ using the scalar product. Denote the result by~$q$:
\begin{equation}
\label{eqn:tau_to_q}
q := (p \lrcorner \tau|_{\R^d})^*.
\end{equation}
It is not hard to see that \eqref{eqn:q_to_tau} and \eqref{eqn:tau_to_q} define an isomorphism from $T_p \E^d$ to $(\Lambda^2 \R^{d+1})^*/\Lambda^2 x^\perp$ and its inverse.

\begin{lem} \label{lem:EucProj}
Let $P$ be a framework on $\E^d$, and let $X$ be the corresponding projective framework on $\RP^d$. Let $Q$ be a velocity field on $P$, and let $T$ be the velocity field on $X$ associated to $Q$ via \eqref{eqn:q_to_tau}. Then $T$ is an infinitesimal motion of $X$ if and only if $Q$ is an infinitesimal motion of $P$, and $T$ is trivial if and only if $Q$ is trivial.
\end{lem}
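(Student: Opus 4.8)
The plan is to treat the two equivalences separately, exploiting that \eqref{eqn:q_to_tau}--\eqref{eqn:tau_to_q} already furnish a pointwise isomorphism $T_p\E^d \cong (\Lambda^2\R^{d+1})^*/\Lambda^2 p^\perp$, hence a linear isomorphism $Q \leftrightarrow T$ between the spaces of velocity fields on $P$ and on $X$. It then suffices to match the defining relation \eqref{eqn:ProjInfMot} of a projective infinitesimal motion with the classical relation \eqref{eqn:InfMot}, and to match the two notions of triviality.

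For the infinitesimal-motion part, first I would record the bivector identity that, since $p_i \wedge p_i = 0$,
\begin{equation*}
x_i \wedge x_j = p_i \wedge p_j = p_i \wedge (p_j - p_i) = p_j \wedge (p_j - p_i),
\end{equation*}
where $p_j - p_i$ lies in $\{x^0 = 0\} = \R^d$, identified with both $T_{p_i}\E^d$ and $T_{p_j}\E^d$. Substituting $y = p_j - p_i$ into \eqref{eqn:q_to_tau} for $\tau_i$ and for $\tau_j$ gives $\langle x_i \wedge x_j, \tau_i \rangle = \langle p_j - p_i, q_i \rangle$ and $\langle x_i \wedge x_j, \tau_j \rangle = \langle p_j - p_i, q_j \rangle$, whence
\begin{equation*}
\langle x_i \wedge x_j, \tau_i - \tau_j \rangle = \langle p_j - p_i, q_i - q_j \rangle.
\end{equation*}
Thus \eqref{eqn:ProjInfMot} holds on the edge $ij$ exactly when \eqref{eqn:InfMot} does, and $T$ is an infinitesimal motion iff $Q$ is.

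For the triviality part I would identify $(\Lambda^2 \R^{d+1})^* = \Lambda^2 (\R^{d+1})^*$ with the space of skew-symmetric endomorphisms $\hat\tau$ of $\R^{d+1}$ via the Euclidean structure, so that $\langle x \wedge y, \tau \rangle = \langle \hat\tau x, y \rangle$ and $p \lrcorner \tau$ corresponds to $\hat\tau p$. A projective velocity field is trivial iff there is a global $\tau$ with $\tau_i = \tau + \Lambda^2 x_i^\perp$ for all $i$; by \eqref{eqn:tau_to_q} the classical vector it induces at $p_i$ is the $\R^d$-component of $\hat\tau p_i$. Writing $p_i = (1, \bar p_i)$ and block-decomposing
\begin{equation*}
\hat\tau = \begin{pmatrix} 0 & -b^\top \\ b & A \end{pmatrix}, \qquad A = -A^\top,\ b \in \R^d,
\end{equation*}
one finds that this component equals $A\bar p_i + b$, i.e.\ the restriction to $P$ of the infinitesimal isometry $x \mapsto Ax + b$ of $\E^d$. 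Since every infinitesimal isometry of $\E^d$ has precisely this form, the global bivectors $\tau$ correspond bijectively to infinitesimal isometries, and a classical velocity field restricts from an infinitesimal isometry iff its projective image is trivial.

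The main obstacle is this triviality direction: one must verify that the passage $\tau \mapsto (\tau_i)_i$ is genuinely bijective onto the trivial projective motions, i.e.\ that a global $\tau$ is recovered from its reductions modulo the subspaces $\Lambda^2 x_i^\perp$. This holds because the kernel of $\tau \mapsto (\tau_i)_i$ is $\bigcap_i \Lambda^2 x_i^\perp = \{\tau : x_i \lrcorner \tau = 0 \text{ for all } i\}$, which is trivial since the $x_i = p_i$ span $\R^{d+1}$. Consequently both spaces of trivial motions have dimension $\binom{d+1}{2}$, and the pointwise isomorphism $Q \leftrightarrow T$ carries one isomorphically onto the other.
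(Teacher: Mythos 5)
Your proof is correct. The first equivalence (infinitesimal motions) is the same computation as in the paper: substitute $y=p_j-p_i$ into \eqref{eqn:q_to_tau} at both endpoints of an edge and conclude $\langle x_i\wedge x_j,\tau_i-\tau_j\rangle=-\langle p_i-p_j,q_i-q_j\rangle$, so the two defining conditions match edge by edge. Where you genuinely diverge is the triviality part. The paper argues asymmetrically: it shows that a trivial $T$ forces $\langle q_i-q_j,p_i-p_j\rangle=0$ for \emph{all} pairs $i,j$ (not just edges), so $Q$ extends to an infinitesimal isometry; the converse is then obtained by comparing dimensions, both spaces of trivial motions having dimension $\binom{d+1}{2}$ (using that the $p_i$ span $\R^{d+1}$). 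You instead identify $(\Lambda^2\R^{d+1})^*$ with skew-symmetric endomorphisms $\hat\tau$ and compute in block form that the classical field induced by a global $\tau$ at $p_i=(1,\bar p_i)$ is exactly $A\bar p_i+b$ with $A$ skew, i.e.\ precisely the general infinitesimal isometry of $\E^d$; this gives both implications at once and makes the correspondence between global bivectors and infinitesimal isometries explicit, at the cost of fixing an auxiliary Euclidean identification. Your closing injectivity check ($\bigcap_i\Lambda^2 p_i^\perp=0$ because the $p_i$ span $\R^{d+1}$) is the same spanning argument the paper uses in its dimension count, so nothing is missing. Both routes are sound; yours is more constructive, the paper's is shorter.
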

\begin{proof}
Equation \eqref{eqn:q_to_tau} implies
$$
\langle p_i - p_j, q_i - q_j \rangle = \langle p_i - p_j, q_i \rangle - \langle p_i - p_j, q_j \rangle = -\langle p_i \wedge p_j, \tau_i - \tau_j \rangle.
$$
Therefore $T$ satisfies \eqref{eqn:ProjInfMot} iff $Q$ satisfies \eqref{eqn:InfMot}.

Assume that $T$ is trivial: $\tau_i = \tau + \Lambda^2 x_i^\perp$ for some $\tau$. Then for every $i,j \in \cV$ we have
$$
\langle q_i - q_j, p_i - p_j \rangle = \langle p_i \lrcorner \tau - p_j \lrcorner \tau, p_i - p_j \rangle = 0,
$$
which implies that $Q$ can be extended to an infinitesimal isometry of $\E^d$. Thus if $T$ is trivial, so is $Q$. To prove the inverse implication, compute the dimensions of the spaces of trivial motions. For the Euclidean framework $P$ it is equal to $\binom{d+1}2$ (recall that $(p_i)_{i \in \cV}$ affinely span $\E^d$ by assumption). For the corresponding projective framework it is equal to the rank of the map
$$
(\Lambda^2 \R^{d+1})^* \to \bigoplus_{i \in \cV} (\Lambda^2 \R^{d+1})^*/\Lambda^2 p_i^\perp.
$$
Since the vectors $(p_i)_{i \in \cV}$ span $\R^{d+1}$, this map is injective, so its rank is equal to $\dim (\Lambda^2 \R^{d+1})^* = \binom{d+1}2$.
\end{proof}

\subsection{Remarks}
\label{subsec:rem}
Grassmann introduced in his book of 1844 ``Die lineale Ausdehn\-ungs\-lehre'' the bivector representation of forces acting on a rigid body (in terms of what we now call Grassmann coordinates). A good account on that is given in \cite{Kle09}. As Klein remarks, ``This book... is written in a style that is extraordinarily obscure, so that for decades it was not considered nor understood. Only when similar trains of thought came from other sources were they recognized belatedly in Grassmann's book.''

Once spelled out, the bivector representation of forces readily implies the projective invariance of static rigidity. Apparently, this was observed by Rankine in \cite{Ran63}, where he writes ``...theorems discovered by Mr. Sylvester ... obviously give at once the solution of the question''. Unfortunately, we don't know which theorems are meant; probably this is something similar to Proposition \ref{prp:ProjStatCorr}.

An exposition of these elegant but unfortunately little known ideas, along with additional references, can be found in \cite{CW82}, \cite{Whi84c}.

It seems that the observation of Rankine wasn't given much attention, because the next mention of the projective invariance of static rigidity I am aware of is 1920 in the paper \cite{Lie20} of Liebmann. Liebmann proves it only for frameworks with $|\cE| = d\, |\cV| - \binom{d+1}2$ that contain $d$ pairwise connected joints. In this case the rigidity matrix can be reduced to a square matrix by fixing the positions of these $d$ joints. Infinitesimal or static rigidity is then equivalent to vanishing of the determinant of this square matrix. Liebmann shows that the determinant is multiplied with a non-zero factor when the framework undergoes a projective transformation. This argument can probably be extended to the general case, but doesn't seem to produce a correspondence between the loads or velocity fields of two projectively equivalent frameworks.

Sauer in \cite{Sau35a} gives a proof of the projective invariance of static rigidity using Grassmann coordinates of forces and finds formula \eqref{eqn:A_p1}. In \cite{Sau35b}, Sauer proves the projective invariance of infinitesimal rigidity in an independent way.

For smooth surfaces in $\R^3$, the projective invariance of infinitesimal rigidity is proved by Darboux \cite{Dar96}.

Other proofs can be found in Wunderlich \cite{Wun82} for frameworks, and Volkov \cite{Vol74} for smooth manifolds.

The association $\Phi \mapsto \Phi^{\mathrm{stat}}$ as described by formulas \eqref{eqn:A_p1} and \eqref{eqn:A_p2} fails to be a functor. Namely, the equation
$$
(\Phi \circ \Psi)^{\mathrm{stat}}_p = \Phi^{\mathrm{stat}}_{\Psi(p)} \circ \Psi^{\mathrm{stat}}_p
$$
holds only up to a constant factor, because the definition of $\Phi^{\mathrm{stat}}$ involved choosing a representative $M \in GL(\R^{d+1})$ of $\Phi \in PGL(\R^{d+1})$. If one would like to have functoriality, one should choose the matrix $M$ in $SL_\pm(\R^{d+1}) = \{M \in GL(\R^{d+1})|\; \det M = \pm 1\}$. When $d$ is even, there are two possibilities, but they lead to the same map $\Phi^{\mathrm{stat}}$ due to $(-x) \wedge (-y) = x \wedge y$. Choosing $M$ in $SL_\pm(\R^{d+1})$ changes the formulas \eqref{eqn:A_p1} and \eqref{eqn:A_p2} by a constant factor that depends on $\Phi$. It would be interesting to know whether this factor has a geometric meaning.

Notions of statics clearly have a homological flavor: equilibrium loads are kind of cycles, resolvable loads are kind of boundaries. This is easy to formalize; in the projective interpretation of Section \ref{subsec:ProjStat} we have a chain complex
$$
\bigoplus_\cE \Lambda^2 \span\{x_i,x_j\} \stackrel{\delta}\longrightarrow \bigoplus_\cV x_i \wedge \R^{d+1} \stackrel{\epsilon}\longrightarrow \Lambda^2 \R^{d+1}
$$
with appropriately defined maps, so that $\ker \epsilon$ consists of equilibrium loads, and $\im \delta$ of resolvable loads on framework $X$. For kinematics, there is a dual cochain complex
$$
\bigoplus_\cE (\Lambda^2 \R^{d+1})^*/(x_i \wedge x_j)^\perp \stackrel{d}\longleftarrow \bigoplus_\cV (\Lambda^2 \R^{d+1})^*/\Lambda^2 x_i^\perp \stackrel{\iota}\longleftarrow (\Lambda^2 \R^{d+1})^*
$$
with $d = \delta^*, \iota = \epsilon^*$. The maps $\delta$ and $d$ can be expressed through the rigidity matrix defined in Section \ref{subsec:RigMat}.

There exist higher-dimensional generalizations of statics, see \cite{TWW95}, \cite{TW00}, \cite{Lee96}. By duality they are related to the algebra of weights \cite{McM93}, \cite{McM96}, and to the combinatorial intersection cohomology \cite{Bra06}. Algebraic properties of the arising chain complexes can be used to prove deep theorems on the combinatorics of simplicial polytopes \cite{Sta80}, \cite{Kal87}, \cite{McM93}.

\section{Infinitesimal Pogorelov maps}
\label{sec:InfPog}

\subsection{Proof of Theorem \ref{thm:PogMaps}}
The definitions of frameworks in $\H^d$ and $\Sph^d$ repeat Definition \ref{dfn:Framework}. Let us define infinitesimal motions.
\begin{dfn}
\label{dfn:HInfMot}
Let $P$ be a framework in $\H^d$ or $\Sph^d$ with graph $(\cV, \cE)$. A velocity field is a collection $(q_i)_{i \in \cV}$ of tangent vectors at the vertices of the framework: $q_i \in T_{p_i}\H^d$, respectively $q_i \in T_{p_i}\Sph^d$.

A velocity field $(q_i)$ is called an infinitesimal motion of $P$ iff
$$
\left.\frac{d}{dt}\right|_{t=0} \dist(p_i(t), p_j(t)) = 0
$$
for every family $(p_i(t))$ such that $p_i(0) = p_i$, $\dot p_i(0) = q_i$.

An infinitesimal motion is called trivial iff it is generated by a differentiable family of isometries of $\H^d$, respectively $\Sph^d$.
\end{dfn}

Recall that we identify $\H^d$ and $\Sph^d$ with subsets of $\R^{d+1}$ according to \eqref{eqn:H^d} and \eqref{eqn:S^d}. The following lemma is straightforward.
\begin{lem}
\label{lem:ScalProd}
A velocity field $(q_i)_{i \in \cV}$ is an infinitesimal motion of $P$ iff ${\langle p_i-p_j, q_i-q_j \rangle = 0}$ for every $ij \in \cE$. Here $\langle \cdot\, , \cdot \rangle$ denotes the Minkowski or the Euclidean scalar product in $\R^{d+1}$, according to whether $P$ is a hyperbolic or a spherical framework.
The tangent space at $p$ is identified with a vector subspace of $\R^{d+1}$.
\end{lem}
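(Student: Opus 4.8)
The plan is to reduce the statement to the classical formulas expressing geodesic distance through the ambient scalar product, and then to differentiate once. First I would record the two ingredients. Differentiating the defining identity $\langle p, p\rangle = 1$ (Minkowski for $\H^d$, Euclidean for $\Sph^d$) along any curve lying in the quadric shows that the tangent space at $p$, viewed inside $\R^{d+1}$, is exactly the orthogonal complement of $p$; in particular $\langle p, q\rangle = 0$ for every $q \in T_p\H^d$, resp. $q \in T_p\Sph^d$. Second, distance is recovered from the scalar product by
\[
\cosh\bigl(\dist(p,p')\bigr) = \langle p,p'\rangle \quad (\text{hyperbolic}), \qquad \cos\bigl(\dist(p,p')\bigr) = \langle p,p'\rangle \quad (\text{spherical}),
\]
with $\langle\cdot,\cdot\rangle$ the appropriate form.

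Next I would differentiate along a family $(p_i(t))$ with $p_i(0) = p_i$ and $\dot p_i(0) = q_i$. Writing $g = \cosh$ resp. $g = \cos$, the chain rule gives
\[
\frac{d}{dt}\Big|_{t=0}\langle p_i(t), p_j(t)\rangle = g'\bigl(\dist(p_i,p_j)\bigr)\cdot \frac{d}{dt}\Big|_{t=0}\dist\bigl(p_i(t),p_j(t)\bigr),
\]
while the product rule gives $\frac{d}{dt}\big|_{0}\langle p_i(t),p_j(t)\rangle = \langle q_i, p_j\rangle + \langle p_i, q_j\rangle$. Note that the left-hand side, hence the derivative of the distance, depends only on $q_i$ and $q_j$; this disposes of the quantifier ``for every family'' in Definition \ref{dfn:HInfMot}, since all admissible families yield the same first-order change.

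Then I would connect this to the bilinear expression in the lemma. Expanding and using the tangency relations $\langle p_i, q_i\rangle = 0 = \langle p_j, q_j\rangle$ gives
\[
\langle p_i - p_j,\, q_i - q_j\rangle = \langle p_i,q_i\rangle - \langle p_i,q_j\rangle - \langle p_j,q_i\rangle + \langle p_j,q_j\rangle = -\bigl(\langle p_i, q_j\rangle + \langle p_j, q_i\rangle\bigr),
\]
which is precisely $-g'\bigl(\dist(p_i,p_j)\bigr)$ times $\frac{d}{dt}\big|_0 \dist(p_i(t),p_j(t))$. Hence, for each edge, the derivative of the distance vanishes if and only if $\langle p_i - p_j, q_i - q_j\rangle = 0$, provided the factor $g'(\dist(p_i,p_j))$ is nonzero; running this over all edges $ij \in \cE$ yields the lemma.

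The one point that requires care is exactly this nonvanishing, i.e. the differentiability of $\dist$ at $(p_i,p_j)$ together with $g' \neq 0$ there. Since $ij \in \cE$ forces $p_i \neq p_j$, the hyperbolic distance is strictly positive and $g' = \sinh > 0$; in the spherical case one needs in addition $\langle p_i, p_j\rangle \neq -1$, i.e. the vertices are not antipodal, so that $0 < \dist < \pi$ and $g' = -\sin \neq 0$. Under these mild, expected nondegeneracy conditions the computation is immediate, which is why the lemma is indeed ``straightforward''.
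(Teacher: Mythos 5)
Your proof is correct, and it is the argument the paper has in mind: the paper itself gives no proof, simply declaring the lemma ``straightforward,'' and the intended route is exactly yours --- the distance formulas $\cosh\bigl(\dist(p,p')\bigr)=\langle p,p'\rangle$ (resp.\ $\cos$), the tangency relations $\langle p_i,q_i\rangle=0$, and the resulting identity $\langle p_i-p_j,\,q_i-q_j\rangle=-\bigl(\langle p_i,q_j\rangle+\langle p_j,q_i\rangle\bigr)$. Your remark that in the spherical case one must exclude antipodal vertices (so that $g'(\dist)\neq 0$) is a genuine nondegeneracy caveat that the paper silently assumes; Definition~\ref{dfn:Framework} only forbids $p_i=p_j$, so this is worth the sentence you give it.
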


Embeddings \eqref{eqn:H^d} and \eqref{eqn:S^d} allow to associate with every framework $P$ in $\H^d$ or $\Sph^d$ a projective framework $X$. Exactly as in the Euclidean case, formulas \eqref{eqn:q_to_tau} and \eqref{eqn:tau_to_q} define a natural bijection between velocity fields on frameworks $P$ and $X$.

\begin{lem}
\label{lem:HypProj}
Let $P$ be a framework in $\H^d$ or $\Sph^d$, and let $X$ be the corresponding projective framework. Let $Q$ be a velocity field on $P$, and let $T$ be the velocity field on $X$ associated with $Q$. Then $T$ is an infinitesimal motion of $X$ if and only if $Q$ is an infinitesimal motion of $P$, and $T$ is trivial if and only if $Q$ is trivial.
\end{lem}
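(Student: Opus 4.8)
The plan is to follow the proof of the Euclidean analogue, Lemma~\ref{lem:EucProj}, almost verbatim; the one new ingredient is the normalization $\langle p_i, p_i \rangle = 1$ built into \eqref{eqn:H^d} and \eqref{eqn:S^d}. Throughout, $\langle \cdot, \cdot \rangle$ denotes the Minkowski (resp.\ Euclidean) scalar product in $\R^{d+1}$, the tangent space $T_{p_i}$ is the orthogonal complement of $p_i$, so that $\langle p_i, q_i \rangle = 0$, and $x_i = p_i$. First I would record the single identity that drives the whole argument:
\begin{equation}
\langle p_i \wedge y, \tau_i \rangle = \langle y, q_i \rangle \quad \text{for every } y \in \R^{d+1}. \tag{$\ast$}
\end{equation}
By \eqref{eqn:q_to_tau} this holds for $y \in T_{p_i}$. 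For arbitrary $y$ write $y = (y - \langle p_i, y\rangle p_i) + \langle p_i, y\rangle p_i$; the first summand lies in $T_{p_i}$ precisely because $\langle p_i, p_i\rangle = 1$, while the second contributes $\langle p_i, y\rangle\, p_i \wedge p_i = 0$ on the left and $\langle p_i, y\rangle \langle p_i, q_i\rangle = 0$ on the right. This step is exactly where the spherical and hyperbolic cases differ from the Euclidean one: there $p_i - p_j$ already lies in the common tangent space $\R^d$ and may be fed directly into \eqref{eqn:q_to_tau}, whereas here it is not tangent to the quadric, and $(\ast)$ is what repairs this.

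Granting $(\ast)$, the first assertion is immediate. Applying $(\ast)$ with $y = p_j$ gives $\langle p_i \wedge p_j, \tau_i\rangle = \langle p_j, q_i\rangle$, and antisymmetry together with $(\ast)$ for the index $j$ gives $\langle p_i \wedge p_j, \tau_j\rangle = -\langle p_i, q_j\rangle$; hence
\[
\langle p_i \wedge p_j, \tau_i - \tau_j\rangle = \langle p_j, q_i\rangle + \langle p_i, q_j\rangle = -\langle p_i - p_j, q_i - q_j\rangle,
\]
the last equality using $\langle p_i, q_i\rangle = \langle p_j, q_j\rangle = 0$. So $T$ satisfies \eqref{eqn:ProjInfMot} on the edge $ij$ iff $Q$ satisfies the condition of Lemma~\ref{lem:ScalProd} there, which proves that $T$ is an infinitesimal motion iff $Q$ is.

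For triviality I would argue as in Lemma~\ref{lem:EucProj}. If $T$ is trivial, say $\tau_i = \tau + \Lambda^2 p_i^\perp$ with $\tau \in (\Lambda^2\R^{d+1})^*$ fixed, then $\Lambda^2 p_i^\perp$ annihilates every $p_i \wedge y$, so $(\ast)$ becomes $\langle y, q_i\rangle = \langle p_i \wedge y, \tau\rangle$ for all $y$. Reading $\tau$ as an alternating $2$-form and raising an index with $\langle\cdot,\cdot\rangle$ yields the skew-symmetric endomorphism $A$ of $\R^{d+1}$ determined by $\langle Ax, y\rangle = \langle x \wedge y, \tau\rangle$; the previous equation then reads $q_i = A p_i$ by nondegeneracy, and since $A$ is skew, $x \mapsto Ax$ is an infinitesimal isometry of $\Sph^d$ (resp.\ $\H^d$) whose restriction to the vertices is $Q$. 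Thus $Q$ is trivial. For the converse I would count dimensions exactly as in Lemma~\ref{lem:EucProj}: assuming the vertices span $\R^{d+1}$, the space of trivial motions of $P$ has dimension $\binom{d+1}2$ (the dimension of the isometry group of $\Sph^d$ or $\H^d$, the restriction map from its Lie algebra being injective), and the space of trivial projective motions has the same dimension $\binom{d+1}2$, since the map $(\Lambda^2\R^{d+1})^* \to \bigoplus_{i} (\Lambda^2\R^{d+1})^*/\Lambda^2 p_i^\perp$ is injective. As the bijection of velocity fields carries trivial $T$ injectively into trivial $Q$, equal dimensions force surjectivity, giving the remaining implication.

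The only real obstacle is the first step: because $p_i - p_j$ is not tangent to the quadric at $p_i$, it cannot be substituted into \eqref{eqn:q_to_tau} as in the Euclidean proof. Identity $(\ast)$, which holds precisely because the vertices are normalized by $\langle p_i, p_i\rangle = 1$, removes this difficulty, after which the argument is purely formal and mirrors Lemma~\ref{lem:EucProj}.
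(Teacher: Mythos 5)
Your proof is correct and takes the same route as the paper, whose entire proof of this lemma is the single sentence ``Due to Lemma~\ref{lem:ScalProd}, the arguments from the proof of Lemma~\ref{lem:EucProj} can be applied.'' Your write-up is a faithful expansion of that remark, and you correctly isolate the one point where the Euclidean argument genuinely needs repair --- namely that $p_i - p_j$ is no longer tangent to the quadric, which your identity $(\ast)$ (resting on $\langle p_i,p_i\rangle=1$ and $\langle p_i,q_i\rangle=0$) resolves.
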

\begin{proof}
Due to Lemma \ref{lem:ScalProd}, the arguments from the proof of Lemma \ref{lem:EucProj} can be applied.
\end{proof}

Theorem \ref{thm:PogMaps} now follows from Lemmas \ref{lem:EucProj} and \ref{lem:HypProj}.

\subsection{Computing Pogorelov maps}
Let $P$ be a Euclidean framework with associated hyperbolic and spherical frameworks $P^\H$ and $P^\Sph$. Our proof of Theorem \ref{thm:PogMaps} shows that there are natural bijections between velocity fields on $P$, $P^\H$, and $P^\Sph$ that map infinitesimal motions to infinitesimal motions and respect the triviality property. Let us denote the vector fields associated with $Q = (q_i)$ by $Q^\H = (q^\H_i)$ and $Q^\Sph = (q^\Sph_i)$.
\begin{prp}
\label{prp:PogForm}
The velocity fields $Q$, $Q^\H$, and $Q^\Sph$ are related by the equations
\begin{eqnarray*}
q_i & = & \pr(\sqrt{1-\|p_i-c\|^2} \cdot q_i^\H);\\
q_i & = & \pr(\sqrt{1+\|p_i-c\|^2} \cdot q_i^\Sph),
\end{eqnarray*}
with $\|\cdot\|$ denoting the Euclidean scalar product. Here $c \in \E^d \subset \R^{d+1}$ is the point with coordinates $(1,0,\ldots,0)$ (the ``tangent point'' of $\E^d$, $\H^d$, and $\Sph^d$), and $\pr: \R^{d+1} \to \R^d$ is the projection $(x^0, x^1, \ldots, x^d) \mapsto (x^1,\ldots,x^d)$.
\end{prp}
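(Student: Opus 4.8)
The plan is to exploit the fact that $P$, $P^\H$, and $P^\Sph$ all project from the origin onto the \emph{same} projective framework $X$, since $\Pi_\H$ and $\Pi_\Sph$ preserve projective classes: $[p_i] = [p_i^\H] = [p_i^\Sph]$. By Lemmas~\ref{lem:EucProj} and~\ref{lem:HypProj}, the velocity fields $Q$, $Q^\H$, $Q^\Sph$ all correspond, via the bijection \eqref{eqn:q_to_tau}--\eqref{eqn:tau_to_q}, to one and the same projective velocity field $T = (\tau_i)$ on $X$; the Pogorelov maps are by definition the compositions $Q \mapsto T \mapsto Q^\H$ and $Q \mapsto T \mapsto Q^\Sph$. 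Everything is computed vertex by vertex, so I fix a vertex and drop the index $i$.

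First I would record the scaling factors. Writing $p = (1, \vec p) \in \E^d$ and $c = (1, 0, \ldots, 0)$, one has $\|p - c\|^2 = \|\vec p\|^2 =: r^2$, so that the Euclidean norm satisfies $\|p\|^2 = 1 + r^2$ and the Minkowski norm satisfies $\|p\|_{1,d}^2 = 1 - r^2$. Since $\Pi_\H$ and $\Pi_\Sph$ are central projections onto the level sets \eqref{eqn:H^d} and \eqref{eqn:S^d}, this gives $p^\H = (1-r^2)^{-1/2} p$ and $p^\Sph = (1+r^2)^{-1/2} p$; these are precisely the two square roots appearing in the statement.

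Next I would pass to the single covector that controls all three velocities. By \eqref{eqn:tau_to_q}, the classical velocity at a vertex is obtained by contracting $\tau$ with the vertex and raising the resulting covector with the relevant scalar product. Set $\alpha := p \lrcorner \tau \in (\R^{d+1})^*$; note $\alpha(p) = \langle p \wedge p, \tau \rangle = 0$. Because contraction is linear in the vector slot and $p^\H, p^\Sph$ are scalar multiples of $p$, we get $p^\H \lrcorner \tau = (1-r^2)^{-1/2}\alpha$ and $p^\Sph \lrcorner \tau = (1+r^2)^{-1/2}\alpha$. Thus all three classical velocities arise by raising the \emph{one} covector $\alpha$ (suitably rescaled) with respect to three metrics: the Euclidean metric on $T_p\E^d$, the hyperbolic metric on $T_{p^\H}\H^d$, and the round metric on $T_{p^\Sph}\Sph^d$. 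A short coordinate computation then finishes the job: the Euclidean raise gives $q = \vec\alpha$; the spherical raise gives $q^\Sph = (1+r^2)^{-1/2}\alpha^\sharp$, automatically tangent to $\Sph^d$ because $\alpha(p^\Sph)=0$, hence $\pr(\sqrt{1+r^2}\, q^\Sph) = \vec\alpha = q$; and the hyperbolic raise, performed with $G = \mathrm{diag}(1,-1,\ldots,-1)$, gives a tangent vector (again by $\alpha(p^\H)=0$) with $\pr(\sqrt{1-r^2}\, q^\H) = \vec\alpha = q$.

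The step I expect to be delicate is the hyperbolic sign. The spherical case is clean because its ambient scalar product is positive definite, but on $T_{p^\H}\H^d$ the restriction of $\langle\cdot,\cdot\rangle_{1,d}$ is negative definite, so raising indices naively flips the sign of the spatial coordinates. The formula comes out with $+\sqrt{1-r^2}$ precisely when one identifies $T_{p^\H}\H^d$ with its dual using the intrinsic (positive-definite) hyperbolic metric, i.e. $-\langle\cdot,\cdot\rangle_{1,d}$ on the tangent space; this is the natural analog of the Euclidean identification in \eqref{eqn:tau_to_q}. Once this convention is fixed, the only remaining checks — that each raised covector lands in the correct tangent space — are immediate from $\alpha(p) = 0$, and the two displayed equations follow.
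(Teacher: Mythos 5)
Your proof is correct and follows essentially the same route as the paper's: both pass to the common projective velocity $\tau$ on the underlying projective framework, set $\alpha = p \lrcorner \tau$, use the scalings $p^\H = (1-r^2)^{-1/2}p$ and $p^\Sph = (1+r^2)^{-1/2}p$, and compare the three index-raisings of $\alpha$. If anything, you are more careful than the paper on the two delicate points --- the direction of the scaling of $p^\H$ and the sign convention in the hyperbolic raise (intrinsic positive-definite metric versus restricted Minkowski form) --- both of which the paper's own proof glosses over.
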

\begin{proof}
For brevity, let us omit the index $i$ and denote by $p^\H$ the vertex of framework $P^\H$ corresponding to the vertex $p$ of $P$. Let $\tau$ be a velocity vector at the vertex $[p]$ in the underlying projective framework. By definition, we have
\begin{eqnarray*}
q & = & (p \lrcorner \tau|_{T_p\E^d})^*;\\
q^\H & = & (p^\H \lrcorner \tau|_{T_{p^\H}\H^d})^*.
\end{eqnarray*}
Since $p^\H \lrcorner \tau \in p^\perp$, we have $q^\H = (p^\H \lrcorner \tau)^*$, where this time $\alpha \mapsto \alpha^*$ denotes the isomorphism $\R^{d+1} \to (\R^{d+1})^*$ induced by the Minkowski scalar product. Also, it is not hard to show that $q = \pr((p \lrcorner \tau)^*)$. From
$$
p^\H = \sqrt{1-\|p_i-c\|^2} \cdot p
$$
we obtain the first formula of the proposition. The formula connecting $q$ with $q^\Sph$ is proved similarly, replacing the Minkowski scalar product in $\R^{d+1}$ with the Euclidean one.
\end{proof}

\subsection{Remarks}
A different derivation of the formulas of Proposition \ref{prp:PogForm} can be found in~\cite{SW07}.

In addition to infinitesimal Pogorelov maps there are finite Pogorelov maps, \cite{Pog73}. They associate with a pair of isometric hypersurfaces $P_1, P_2$ in $\E^d$ pairs $P_1^\H, P_2^\H$ and $P_1^\Sph, P_2^\Sph$ of isometric hypersurfaces in the hyperbolic and in the spherical space, respectively.

Liebmann in \cite{Lie20} proves the projective invariance of static rigidity for a certain class of frameworks, see Section \ref{subsec:rem}. After developing statics and kinematics in an arbitrary Cayley metric (which was also done by Lindemann \cite{Lin74}), he proves that the static rigidity of a framework doesn't depend on the choice of the metric.

In the smooth case, Volkov \cite{Vol74} proves that a map between Riemannian manifolds that sends geodesics to geodesics maps infinitesimally flexible hypersurfaces to infinitesimally flexible ones. Since projective maps of $\E^d$ to itself and gnomonic projections of the Euclidean space to the spherical and hyperbolic spaces send geodesics to geodesics, Volkov's theorem includes Darboux' and Pogorelov's as special cases.

There also exist infinitesimal Pogorelov maps to frameworks in the de Sitter space $\mathrm{d}\mathbb{S}^d$ (the one-sheeted hyperboloid $\{\|x\|_{1,d} = -1\}$ with the metric induced by the Minkowski metric). The metric on $\mathrm{d}\mathbb{S}^d$ is Lorentzian of constant curvature 1. The polar dual to a hyperbolic polyhedron is a de Sitter polyhedron. Thus, the Pogorelov map from $\H^d$ to $\mathrm{d}\mathbb{S}^d$ can be given a more geometric meaning, when the relations between polarity and infinitesimal rigidity are taken into account.

\bibliography{Rigidity}
\bibliographystyle{alpha}

\end{document}